\numberwithin{equation}{section}
\newtheorem{Theorem}{Theorem}[section]
\newtheorem{Lemma}[Theorem]{Lemma}
\newtheorem{Proposition}[Theorem]{Proposition}
{ \theoremstyle{definition}
\newtheorem{Definition}[Theorem]{Definition}

\newtheorem{Example}[Theorem]{Example}
\newtheorem{Remark}[Theorem]{Remark} }
\newcommand{\subscripts}[3]{%
 \@mathmeasure\z@\displaystyle{#2}%
 \global\setbox\@ne\vbox to\ht\z@{}\dp\@ne\dp\z@
 \setbox\tw@\box\@ne
 \@mathmeasure4\displaystyle{\copy\tw@_{#1}}%
 \@mathmeasure6\displaystyle{{#2}_{#3}}%
 \dimen@-\wd6 \advance\dimen@\wd4 \advance\dimen@\wd\z@
 \hbox to\dimen@{}\mathop{\kern-\dimen@\box4\box6}%
}
\newcommand{\prarrow}[2]{\ar@<0.5ex>[r]^-{#1} \ar@<-0.5ex>[r]_-{#2}}
\newcommand{\plarrow}[2]{\ar@<0.5ex>[l]^-{#1} \ar@<-0.5ex>[l]_-{#2}}
\newcommand{\pdarrow}[2]{\ar@<0.5ex>[d]^-{#1} \ar@<-0.5ex>[d]_-{#2}}
\newcommand{\puarrow}[2]{\ar@<0.5ex>[u]^-{#1} \ar@<-0.5ex>[u]_-{#2}}
\begin{document}

\allowdisplaybreaks

\newcommand{\arXivNumber}{2410.05846}

\renewcommand{\PaperNumber}{070}

\FirstPageHeading

\ShortArticleName{Mikami--Weinstein Type Theorem for Cosymplectic Groupoid Actions}

\ArticleName{Mikami--Weinstein Type Theorem for Cosymplectic\\ Groupoid Actions}

\Author{Shuhei YONEHARA}

\AuthorNameForHeading{S.~Yonehara}

\Address{National Institute of Technology, Yonago College, Tottori, 683-8502, Japan}
\Email{\href{shuhei.yonehara.0201@gmail.com}{shuhei.yonehara.0201@gmail.com}}
\URLaddress{\url{https://sites.google.com/view/shuheiyonehara}}

\ArticleDates{Received January 20, 2025, in final form August 06, 2025; Published online August 16, 2025}

\Abstract{The Mikami--Weinstein theorem is a generalization of the classical Marsden--Weinstein--Meyer symplectic reduction theorem to the case of symplectic groupoid actions. In this paper, we introduce the notion of a cosymplectic groupoid action on a cosymplectic manifold and prove a theorem which is a natural analogue of the Mikami--Weinstein theorem.}

\Keywords{cosymplectic manifolds; cosymplectic groupoids; momentum maps; Hamiltonian actions}

\Classification{53D17; 53D20; 22A22; 58H05}

\section{Introduction}
Since the pioneering work of Marsden--Weinstein and Meyer \cite{marsden1974reduction,meyer1973symmetries}, many types of reduction theorems have been studied for various geometric structures on manifolds. Albert \cite{albert1989theoreme} studied Hamiltonian actions on cosymplectic manifolds, which are odd-dimensional analogues of symplectic manifolds (see \cite{cappelletti2013survey} for more details about cosymplectic manifolds) and proved a reduction theorem. On the other hand, Mikami--Weinstein \cite{mikami1988moments} generalized the Marsden--Weinstein--Meyer theorem to symplectic groupoid actions, which extends the notion of a Hamiltonian action on symplectic manifolds.

In this paper, we define a notion of an action of a \textit{cosymplectic groupoid} on a cosymplectic manifold by using the notion of a~\textit{Lagrangian--Legendrian submanifold}. Afterwards, we prove a reduction theorem which is an analogue of the Mikami--Weinstein theorem. The notion of a~cosymplectic groupoid is introduced by \cite{djiba2015cosymplectic} and recently studied in \cite{fernandes2023cosymplectic}. They are defined as Lie groupoids whose space of arrows is endowed with a multiplicative cosymplectic structure.

\begin{table}[htbp]\renewcommand{\arraystretch}{1.2}
 \centering
 \begin{tabular}{|c||c|c|}
 \hline
 &Phase space&Symmetry\\ \hline
 Marsden--Weinstein--Meyer &symplectic manifold& Lie group\\
 Mikami--Weinstein &symplectic manifold & symplectic groupoid \\
 Albert & cosymplectic manifold & Lie group\\
 this paper&cosymplectic manifold &cosymplectic groupoid \\ \hline
 \end{tabular}
\end{table}

This paper is organized as follows. In Section~\ref{cosymplectic}, we briefly recall cosymplectic structures and the reduction theorem by Albert. In Section~\ref{groupoid}, we recall the notion of a symplectic groupoid and its role in Poisson geometry. In addition, we review the definition and some properties of cosymplectic groupoids. In Section~\ref{action}, we introduce the notion of a Lagrangian--Legendrian submanifold of cosymplectic manifolds, and define cosymplectic actions of cosymplectic groupoids on cosymplectic manifolds. We observe that if a cosymplectic groupoid $\mathcal{G}=(G_1\rightrightarrows G_0)$ acts on a cosymplectic manifold $M$, then a symplectic groupoid $S_\mathcal{G}=(S_{G_1}\rightrightarrows G_0)$, where $S_{G_1}$ is the symplectic leaf of $G_1$ that contains unit arrows, acts on each symplectic leaf of $M$. In Section~\ref{Theorem}, we prove the main theorem.

\begin{Theorem}[Theorem~\ref{thm:3}]
 Let $\mathcal{G}=(G_1\rightrightarrows G_0)$ be a cosymplectic groupoid and $(M,\eta,\omega)$ a cosymplectic, free and proper left $\mathcal{G}$-module with respect to a momentum map $\rho\colon M\to G_0$. Assume that $\xi\in\rho(M)$ is a regular value of $\rho$. Then $(S_{\mathcal{G}})_\xi\backslash \rho^{-1}(\xi)$ admits a unique cosymplectic structure $\bigl(\eta^\xi,\omega^\xi\bigr)$ such that $p^\ast\eta^\xi=\eta|_{\rho^{-1}(\xi)}$, $ p^\ast\omega^\xi=\omega|_{\rho^{-1}(\xi)}$, where $(S_{\mathcal{G}})_\xi$ is the isotropy group at $\xi$ and $p\colon\rho^{-1}(\xi)\to (S_{\mathcal{G}})_\xi\backslash \rho^{-1}(\xi)$ is the quotient map.
\end{Theorem}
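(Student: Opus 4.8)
The plan is to run the standard reduction scheme: cut out the level set, quotient by the isotropy, descend the restricted forms, and then verify the cosymplectic conditions, reducing the nondegeneracy leafwise to the Mikami--Weinstein theorem.

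First I would record the smooth setup. Since $\xi$ is a regular value, $\rho^{-1}(\xi)$ is an embedded submanifold of $M$ with $T_x\rho^{-1}(\xi)=\ker\mathrm{d}_x\rho$. Because the action of a left $\mathcal{G}$-module satisfies $\rho(g\cdot x)=t(g)$ and $s(g)=\rho(x)$, an arrow $g$ in the isotropy group $(S_{\mathcal{G}})_\xi$ (i.e., with $s(g)=t(g)=\xi$) sends $\rho^{-1}(\xi)$ to itself, so $(S_{\mathcal{G}})_\xi$ acts on $\rho^{-1}(\xi)$. This restricted action is free and proper, being the restriction of the free and proper $\mathcal{G}$-action, so $(S_{\mathcal{G}})_\xi\backslash\rho^{-1}(\xi)$ is a smooth manifold and $p$ is a surjective submersion, in fact a principal $(S_{\mathcal{G}})_\xi$-bundle. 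In particular $p^\ast$ is injective on differential forms, which already yields the uniqueness of $\bigl(\eta^\xi,\omega^\xi\bigr)$ and reduces the existence to showing that $\eta|_{\rho^{-1}(\xi)}$ and $\omega|_{\rho^{-1}(\xi)}$ are basic.

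Next I would show that the two restricted forms are basic, i.e., invariant and horizontal. Invariance is immediate, since a cosymplectic action preserves $(\eta,\omega)$ and hence the restricted $(S_{\mathcal{G}})_\xi$-action preserves the restrictions. For horizontality I must check that $\iota_v\eta=0$ and $\iota_v\omega=0$ along $\rho^{-1}(\xi)$ for every $v$ tangent to an $(S_{\mathcal{G}})_\xi$-orbit. The statement for $\eta$ follows from the observation of Section~\ref{action} that $S_{\mathcal{G}}$ acts within the symplectic leaves of $M$, so the orbit directions lie in $\ker\eta$. The statement for $\omega$ is the substantive point and should follow from the Lagrangian--Legendrian condition on the action together with the momentum map relation, which forces the orbit tangent space to sit inside the $\omega$-orthogonal complement of $\ker\mathrm{d}_x\rho$. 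Granting horizontality, both forms descend to unique forms $\eta^\xi,\omega^\xi$ with $p^\ast\eta^\xi=\eta|_{\rho^{-1}(\xi)}$ and $p^\ast\omega^\xi=\omega|_{\rho^{-1}(\xi)}$, and closedness is automatic: $p^\ast\mathrm{d}\eta^\xi=\mathrm{d}\bigl(\eta|_{\rho^{-1}(\xi)}\bigr)=0$ with $p^\ast$ injective gives $\mathrm{d}\eta^\xi=0$, and likewise $\mathrm{d}\omega^\xi=0$.

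Finally I would verify the nondegeneracy $\eta^\xi\wedge\bigl(\omega^\xi\bigr)^{k}\neq0$, where $2k+1=\dim\bigl((S_{\mathcal{G}})_\xi\backslash\rho^{-1}(\xi)\bigr)$. The Reeb field $R$ satisfies $\iota_R\omega=0$, so by the momentum map relation $\mathrm{d}_x\rho(R)=0$ and $R$ is tangent to $\rho^{-1}(\xi)$; it descends through $p$ to the field that will play the role of the Reeb field of $\eta^\xi$. Intersecting $\rho^{-1}(\xi)$ with a symplectic leaf $L$ of $M$ places us exactly in the Mikami--Weinstein situation for the symplectic groupoid action of $S_{\mathcal{G}}$ on $(L,\omega|_L)$, so $\omega|_L$ descends to a symplectic form on the leafwise reduced space; these reduced leaves are the symplectic leaves of the quotient and exhibit $\omega^\xi$ as leafwise symplectic, while the descended $R$ spans a complementary line on which $\eta^\xi$ evaluates to $1$. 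Combining these gives that $\eta^\xi\wedge\bigl(\omega^\xi\bigr)^{k}$ is a volume form, so $\bigl(\eta^\xi,\omega^\xi\bigr)$ is cosymplectic. I expect the main obstacle to be the horizontality and nondegeneracy of $\omega$: the linear-algebra identification of the $(S_{\mathcal{G}})_\xi$-orbit directions with the radical $\ker\mathrm{d}_x\rho\cap(\ker\mathrm{d}_x\rho)^{\perp_\omega}$ inside each leaf is where the full strength of the Lagrangian--Legendrian definition of the action, rather than mere invariance, must be used, and where one must confirm that the Reeb direction meets the leafwise-reduced directions transversally so that $\eta^\xi$ pairs nontrivially with a complement of $\ker\omega^\xi$.
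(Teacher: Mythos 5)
Your proposal is correct and follows essentially the same route as the paper: both arguments rest on the fact that $S_\mathcal{G}$ acts symplectically on each symplectic leaf of $M$ (Proposition~\ref{prop:2}), a leafwise application of the Mikami--Weinstein theorem (Theorem~\ref{thm:2}), and the descent of the Reeb field via the Lagrangian--Legendrian property of the graph, your ``basic forms'' packaging (uniqueness from injectivity of $p^\ast$, existence from invariance plus horizontality) differing only cosmetically from the paper's direct construction of $\bigl(\eta^\xi,\omega^\xi\bigr)$ out of the reduced foliation, the leafwise symplectic forms, and the descended Reeb field $R^\xi$. The horizontality of $\omega$ that you flag as the main remaining obstacle is in fact already closed by the leafwise setup you describe: orbit directions are tangent to the leaves, inside each leaf they lie in the radical of the leafwise level set by the Mikami--Weinstein reduction, and the transverse Reeb component of any vector in $\operatorname{Ker}{\rm d}\rho$ is handled by $\iota_R\omega=0$ together with ${\rm d}\rho(R)=0$.
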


In Section~\ref{examples}, we give examples of our main theorem. The main example reconstructs Albert's cosymplectic reduction theorem. Lastly, in Section~\ref{morita}, we mention Morita equivalence of cosymplectic groupoids and show potential for future research.

\section{Cosymplectic manifolds}\label{cosymplectic}
An \textit{almost cosymplectic structure} on a $(2n+1)$-dimensional manifold $M$ is a pair of $\eta\in\Omega^1(M)$ and $\omega\in\Omega^2(M)$ such that $\eta\wedge\omega^n\neq0$. On an almost cosymplectic manifold $(M,\eta,\omega)$, there is a~unique vector field $R$ which satisfies
$\omega(R,-)=0$, $\eta(R)=1$.
The vector field $R$ is called the \textit{Reeb vector field} of $(M,\eta,\omega)$. Moreover, we have an isomorphism of $C^\infty(M)$-modules $\flat\colon\mathfrak{X}(M)\to \Omega^1(M)$ defined by $\flat(X)=\omega(X,-)+\eta(X)\eta$. Conversely, a pair $(\eta,\omega)$ is an almost cosymplectic structure if and only if the map $\flat\colon\mathfrak{X}(M)\to \Omega^1(M)$ defined as above is an isomorphism and there is a vector field $R$ which satisfies the above conditions.

An almost cosymplectic structure $(\eta,\omega)$ is called a contact structure when $\omega={\rm d}\eta$.
On the~other hand, an almost cosymplectic structure $(\eta,\omega)$ is called a \textit{cosymplectic structure} when ${{\rm d}\eta=0}$, $ {\rm d}\omega=0$.

For a contact structure $\eta\in\Omega^1(M)$, the distribution $\operatorname{Ker}\eta$ is completely non-integrable. On the other hand, for a cosymplectic structure $(\eta,\omega)$, the distribution $\operatorname{Ker}\eta$ is integrable since $\eta$ is closed. Therefore, contact structures and cosymplectic structures are two classes of almost cosymplectic structures which are polar opposites of each other.

Two cosymplectic manifolds $(M_1,\eta_1,\omega_1)$ and $(M_2,\eta_2,\omega_2)$ are said to be \textit{isomorphic} if there is a diffeomorphism $f\colon M_1\to M_2$ which satisfies $f^\ast\eta_2=\eta_1$ and $f^\ast\omega_2=\omega_1$. Then $f$ is called an \textit{isomorphism} of cosymplectic manifolds.

Let $(M_1,\eta_1,\omega_1)$ and $(M_2,\eta_2,\omega_2)$ be two cosymplectic manifolds and $\dim M_1=2n+1$, $\dim M_2=2m+1$. Then a pair $(\eta,\omega)$ of forms defined by
$\eta=\eta_1+\eta_2$, $\omega=\omega_1+\omega_2+\eta_1\wedge {\rm d}t$
is a~cosymplectic structure on $M_1\times M_2\times\mathbb{R}$, where $t$ denotes the coordinate of $\mathbb{R}$. In fact, $\eta$ and~$\omega$ are closed and
\[\eta\wedge\omega^{n+m+1}=(n+m+1)\omega_1^n\wedge\omega_2^m\wedge\eta_2\wedge\eta_1\wedge {\rm d}t\neq0\]
holds.

A cosymplectic structure $(\eta,\omega)$ on $M$ induces a Poisson structure $\pi\in\Gamma\bigl(\wedge^2 TM\bigr)$ on $M$ by~$\pi(\alpha,\beta)=\omega\bigl(\flat^{-1}\beta,\flat^{-1}\alpha\bigr)$, where $\alpha,\beta\in T^\ast M$. This Poisson structure is regular and has corank 1. Its symplectic leaves coincide with those of the integrable distribution $\operatorname{Ker}\eta$ and the symplectic form on a symplectic leaf $S$ is $\omega|_S$. In fact, it is known that a cosymplectic structure on $M$ is equivalent to a corank 1 regular Poisson structure on $M$ with a Poisson vector field which is transverse to the symplectic leaves \cite{guillemin2011codimension}.

For every function $f\in C^\infty(M)$ on a cosymplectic manifold $M$, we can associate a vector field~$X_f$ by
$X_f=\flat^{-1}({\rm d}f-R(f)\eta)$.
$X_f$ is called the \textit{Hamiltonian vector field} of $f$. This condition is equivalent to
$\omega(X_f,-)={\rm d}f-R(f)\eta$, $\eta(X_f)=0$. $X_f$ coincides with the usual notion of the Hamiltonian vector field determined by the Poisson bivector; namely, $X_f=\pi^\sharp({\rm d}f)$ holds, where the map $\pi^\sharp\colon T^\ast M\to TM$ is defined by $\beta\bigl(\pi^\sharp(\alpha)\bigr)=\pi(\alpha,\beta)$.

Let $(M,\eta,\omega)$ be a cosymplectic manifold, $G$ a Lie group acts on $M$ from left, and $L_g\colon M\to M$ the map of left action by $g\in G$. We suppose that the action preserves $\eta$, $\omega$, i.e., $L_g^\ast \eta=\eta$, $ L_g^\ast \omega=\omega$. Denote the Lie algebra of $G$ as $\mathfrak{g}$. Albert \cite{albert1989theoreme} defined the notion of a momentum map on cosymplectic manifolds.

\begin{Definition}
 A smooth map $\mu\colon M\to\mathfrak{g}^\ast$ is called a \textit{momentum map} when the following conditions are satisfied:
 \begin{itemize}\itemsep=0pt
 \item $\mu$ is equivariant, i.e., $\mu(gx)=\textrm{Ad}^\ast_g\mu(x)$ holds for any $x\in M$ and $g\in G$.
 \item For any $A\in\mathfrak{g}$, the induced vector field $A^\ast\in\mathfrak{X}(M)$ is the Hamiltonian vector field of a~function $\mu^A\colon M\to\mathbb{R}$ defined by $\mu^A(x)=(\mu(x))(A)$,
 \item For the Reeb vector field $R$ and any $A\in\mathfrak{g}$, ${\rm d}\mu^A(R)=0$ holds.
 \end{itemize}
\end{Definition}

The action of $G$ is said to be \textit{Hamiltonian} if there is a momentum map. Now we assume that there is a Hamiltonian action of $G$ on $(M,\eta,\omega)$ which is free and proper. Let $\xi\in\mathfrak{g}^\ast$ be a regular value of a momentum map $\mu\colon M\to\mathfrak{g}^\ast$. Since $\mu$ is equivariant, the isotropy group~$G_\xi$ acts on~$\mu^{-1}(\xi)$. We denote the quotient $G_\xi\backslash\mu^{-1}(\xi)$ as $M^\xi$ and the natural projection as~${p\colon\mu^{-1}(\xi)\to M^\xi}$. The following theorem is an analogue of Marsden--Weinstein--Meyer theorem.

\begin{Theorem}[Albert \cite{albert1989theoreme}]\label{thm:0}
 There is a unique cosymplectic structure $\bigl(\eta^\xi,\omega^\xi\bigr)$ on $M^\xi$ which satisfies $p^\ast\eta^\xi=\eta|_{\mu^{-1}(\xi)}$, $ p^\ast\omega^\xi=\omega|_{\mu^{-1}(\xi)}$.
\end{Theorem}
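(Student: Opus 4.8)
The plan is to follow the classical Marsden--Weinstein--Meyer strategy, adapted to the extra datum $\eta$ and its Reeb field $R$. Write $Z=\mu^{-1}(\xi)$ and let $\mathfrak{g}_\xi$ be the Lie algebra of $G_\xi$. Since $\xi$ is a regular value, $Z$ is a submanifold of $M$, and equivariance of $\mu$ shows that $G_\xi$ preserves $Z$ (for $g\in G_\xi$ and $x\in Z$ one has $\mu(gx)=\operatorname{Ad}^\ast_g\xi=\xi$), so the free and proper $G$-action restricts to a free and proper $G_\xi$-action on $Z$. Hence $M^\xi=G_\xi\backslash Z$ is a smooth manifold and $p\colon Z\to M^\xi$ is a surjective submersion. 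Because $p^\ast$ is then injective on differential forms, any pair $\bigl(\eta^\xi,\omega^\xi\bigr)$ satisfying the stated pullback identities is unique; this settles uniqueness and reduces existence to showing that $\eta|_Z$ and $\omega|_Z$ are basic, i.e., $G_\xi$-invariant and horizontal.

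Invariance is immediate from $L_g^\ast\eta=\eta$ and $L_g^\ast\omega=\omega$. For horizontality I would use that, for $A\in\mathfrak{g}$, the generator $A^\ast$ equals the Hamiltonian vector field $X_{\mu^A}$, so $\eta(A^\ast)=0$ and $\omega(A^\ast,-)={\rm d}\mu^A-R\bigl(\mu^A\bigr)\eta$; the third axiom of a momentum map gives $R\bigl(\mu^A\bigr)={\rm d}\mu^A(R)=0$, hence $\omega(A^\ast,-)={\rm d}\mu^A$ and $T_xZ=\ker{\rm d}\mu_x=\{v:\omega(A^\ast_x,v)=0\ \forall A\}$. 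For $A\in\mathfrak{g}_\xi$, equivariance yields ${\rm d}\mu_x(A^\ast_x)=\pm\operatorname{ad}^\ast_A\xi=0$, so $A^\ast$ is tangent to $Z$; and for $v\in T_xZ$ we then get $\omega(A^\ast_x,v)={\rm d}\mu^A_x(v)=0$ together with $\eta(A^\ast_x)=0$. Thus $\eta|_Z$ and $\omega|_Z$ are basic and descend to unique forms $\eta^\xi,\omega^\xi$ on $M^\xi$. Closedness is then formal: $p^\ast{\rm d}\eta^\xi={\rm d}(\eta|_Z)=({\rm d}\eta)|_Z=0$ and likewise for $\omega$, and injectivity of $p^\ast$ gives ${\rm d}\eta^\xi={\rm d}\omega^\xi=0$.

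The substantive step is the non-degeneracy $\eta^\xi\wedge\bigl(\omega^\xi\bigr)^k\neq0$, i.e., that $\bigl(\eta^\xi,\omega^\xi\bigr)$ is genuinely cosymplectic rather than merely a closed pair, and I would argue it pointwise. First, the Reeb field descends: $R$ is $G$-invariant (being canonically attached to the invariant pair $(\eta,\omega)$) and tangent to $Z$ by ${\rm d}\mu^A(R)=0$, so $R^\xi:=p_\ast R$ is well defined and satisfies $\eta^\xi\bigl(R^\xi\bigr)=1$ and $\omega^\xi\bigl(R^\xi,-\bigr)=0$. It remains to show $\omega^\xi$ is non-degenerate on $\ker\eta^\xi$. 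Fix $x\in Z$, set $W=\ker\eta_x$ (on which $\omega_x$ is symplectic), and note that all generators lie in $W$ since $\eta(A^\ast)=0$. Writing $U=\{A^\ast_x:A\in\mathfrak{g}\}$ for the orbit tangent, one checks $T_xZ\cap W=U^{\perp_{\omega|_W}}$ and $U\cap U^{\perp_{\omega|_W}}=\{A^\ast_x:A\in\mathfrak{g}_\xi\}$, so that $\ker\eta^\xi$ at $p(x)$ is canonically $U^{\perp_{\omega|_W}}/\bigl(U\cap U^{\perp_{\omega|_W}}\bigr)$ with the form induced by $\omega|_W$. The standard linear symplectic reduction lemma says precisely that this quotient carries a non-degenerate induced form, which is $\omega^\xi|_{\ker\eta^\xi}$; hence $\eta^\xi\wedge\bigl(\omega^\xi\bigr)^k\neq0$.

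I expect the delicate points to be the identification $U\cap U^{\perp_{\omega|_W}}=\{A^\ast_x:A\in\mathfrak{g}_\xi\}$, where equivariance and freeness of the action enter, and the bookkeeping that keeps the transverse Reeb direction cleanly separated from the symplectic directions $\ker\eta^\xi$. As a consistency check, $\dim\ker\eta^\xi=2n-\dim\mathfrak{g}-\dim\mathfrak{g}_\xi$ is even because $\dim\mathfrak{g}-\dim\mathfrak{g}_\xi$ is the dimension of the coadjoint orbit through $\xi$, so $M^\xi$ is odd-dimensional, as a cosymplectic manifold must be.
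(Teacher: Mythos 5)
Your proof is correct, but it takes a genuinely different route from the paper. The paper never proves Theorem~\ref{thm:0} directly: it cites Albert, and within the paper the statement is ultimately recovered (in the second example of Section~\ref{examples}) as a special case of the main Theorem~\ref{thm:3}, applied to the trivial $\mathbb{R}$-central extension $G\times\mathfrak{g}^\ast\times\mathbb{R}\rightrightarrows\mathfrak{g}^\ast$ acting through the Reeb flow on $M_\varepsilon$. The paper's actual reduction argument (the proof of Theorem~\ref{thm:3}) is foliation-based: the condition ${\rm d}\rho(R)=0$ makes each symplectic leaf of $M$ transverse to the level set, symplectic reduction (Theorem~\ref{thm:2}) is applied leaf by leaf to produce a reduced symplectic foliation, the invariant Reeb field is pushed down to $R^\xi$, and then $\eta^\xi$ is chosen as a defining $1$-form of the reduced foliation normalized by $\eta^\xi\bigl(R^\xi\bigr)=1$ while $\omega^\xi$ is prescribed by its leafwise values together with $\omega^\xi\bigl(R^\xi,-\bigr)=0$; nondegeneracy is inherited from the leafwise symplectic forms plus transversality of $R^\xi$. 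You instead run the classical Marsden--Weinstein--Meyer argument directly on the pair $(\eta,\omega)$: both forms are basic for the $G_\xi$-action on $Z=\mu^{-1}(\xi)$ (invariance plus horizontality, via $\eta(A^\ast)=0$ and $\omega(A^\ast,-)={\rm d}\mu^A$, the latter using the axiom ${\rm d}\mu^A(R)=0$), so they descend; uniqueness and closedness are formal from injectivity of $p^\ast$; and nondegeneracy is a pointwise application of the linear symplectic reduction lemma on $W=\ker\eta_x$ with $C=U^{\perp_{\omega|_W}}$, $C\cap C^{\omega|_W}=U\cap U^{\perp_{\omega|_W}}$. Your approach buys self-containedness --- no groupoid theory, no Mikami--Weinstein black box, and $\eta$, $\omega$ handled symmetrically as basic forms; it is essentially Albert's original argument. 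The paper's approach buys generality (the identical argument covers cosymplectic groupoid actions) and makes the reduced symplectic foliation explicit, at the cost of the central-extension and local-groupoid apparatus when specialized back to a Lie group action. One small correction to your own commentary: in the identification $U\cap U^{\perp_{\omega|_W}}=\{A^\ast_x : A\in\mathfrak{g}_\xi\}$ it is equivariance (via ${\rm d}\mu_x(A^\ast_x)=\pm\operatorname{ad}^\ast_A\xi$) that does all the work, not freeness; freeness is needed instead for the smooth quotient and for the vertical space at $x$ to have dimension $\dim\mathfrak{g}_\xi$.
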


\begin{Remark}
 In \cite{yonehara2024reduction}, reduction theorems of coK\"{a}hler manifolds and 3-cosymplectic manifolds are proved. They are natural odd-dimensional analogues of the reduction theorems of K\"{a}hler manifolds and hyperK\"{a}hler manifolds \cite{hitchin1987hyperkahler}, respectively.
\end{Remark}

\section{Symplectic groupoids and cosymplectic groupoids}\label{groupoid}

A groupoid is a small category in which all arrows are invertible. This is summarized in the following diagram
\[\xymatrix{G_1\subscripts{s}{\times}{t}G_1\ar[r]_-m&G_1 \ar@(dl,dr)^i \prarrow{s}{t} &G_0 \ar[r]_-u & G_1,}\]
where $G_1\subscripts{s}{\times}{t}G_1=\{(g,h)\in G_1\times G_1\mid s(g)=t(h)\}$.\footnote{Throughout the paper, we will use this ``fibered product'' notation without explanation.} $G_1$ is a~set of arrows and $G_0$ is a~set of objects, $m$, $i$, $s$, $t$, $u$ (these maps are called structure maps of the groupoid) are maps of multiplication, inverse, source, target, and unit, respectively. $G_1$ and $G_0$ are sometimes called the total space and the base space, respectively. For any $\xi\in G_0$, $s^{-1}(\xi)\cap t^{-1}(\xi)$ is a~group. This group is called the \textit{isotropy group} on $\xi$, and denoted by $\mathcal{G}_\xi$. We simply denote a~groupoid~${\mathcal{G}=(G_1,G_0,m,i,s,t,u)}$ by $\mathcal{G}=(G_1\rightrightarrows G_0)$, $m(g,h)$ by $gh$, $u(\xi)$ by $1_\xi$ for $\xi\in G_0$.

A groupoid is called a \textit{Lie groupoid} if $G_1$ and $G_0$ are smooth manifolds, $s$, $t$ are smooth submersions, and $m$, $i$, $u$ are smooth maps. A Lie groupoid $H_1\rightrightarrows H_0$ is called a \textit{Lie subgroupoid} of another Lie groupoid $G_1\rightrightarrows G_0$ when $H_1\rightrightarrows H_0$ is a subcategory of $G_1\rightrightarrows G_0$ and $H_1$ is an immersed submanifold of $G_1$. A \textit{morphism} between Lie groupoids is a smooth functor.

In Poisson geometry, there is an important class of Lie groupoids, namely, symplectic group\-oids. Roughly speaking, symplectic groupoids are ``integration'' of Poisson manifolds.
\begin{Definition}
 A \textit{symplectic groupoid} is a pair $(G_1\rightrightarrows G_0, \omega_{G_1})$ of a Lie groupoid and a~symplectic form on $G_1$ which is \textit{multiplicative}, i.e.,
 $m^\ast\omega_{G_1}=\textrm{pr}_1^\ast\omega_{G_1}+\textrm{pr}_2^\ast\omega_{G_1}$
 holds, where $\textrm{pr}_i\colon G_1\subscripts{s}{\times}{t}G_1\allowbreak\to G_1$ denotes the natural projections.
\end{Definition}

The space of objects $G_0$ of a symplectic groupoid $(G_1\rightrightarrows G_0, \omega_{G_1})$ has a unique \textit{integrable}\footnote{A Poisson manifold is said to be integrable when induced Lie algebroid (cotangent bundle) is integrable by a~Lie groupoid. For more details, see~\cite{crainic2011lectures}, for example.} Poisson structure such that the source map is a Poisson map. Conversely, Mackenzie and Xu~\cite{mackenzie2000integration} proved that for any integrable Poisson manifold $G_0$, there exists a unique (up to isomorphism) symplectic groupoid $(G_1\rightrightarrows G_0, \omega_{G_1})$ whose $s$-fiber $s^{-1}(\xi)$ on each $\xi\in G_0$ is simply connected (such a Lie groupoid is said to be $s$-simply connected), and these operations are inverses of each other. So there is a correspondence
\[\{s\text{-simply connected symplectic groupoid}\}\overset{\text{$1\!:\!1$}}\longleftrightarrow\{\text{integrable Poisson manifold}\}.\]

\begin{Example}
 Let $G_0$ be a manifold and $G$ a Lie group acting on $G_0$ from left. Then one obtains a Lie groupoid $G\times G_0\rightrightarrows G_0$ by defining the following structure maps:
 \begin{gather*}
 s(g,\xi)=\xi,\qquad t(g,\xi)=g\xi,\qquad 1_{\xi}=(e,\xi),\\
 (g,h\xi)(h,\xi)=(gh,\xi),\qquad i(g,\xi)=\bigl(g^{-1},g\xi\bigr),
 \end{gather*}
 where $g,h\in G$, $ \xi\in G_0$ and $e$ is the unit of $G$. The Lie groupoid $G\times G_0\rightrightarrows G_0$ is called the \textit{action groupoid} associated to the Lie group action.

 Let $\mathfrak{g}$ be the Lie algebra of $G$. There is a left $G$-action $\rm{Ad^\ast\colon G\to\rm{GL}(\mathfrak{g}^\ast)}$ on $\mathfrak{g}^\ast$ called the \textit{coadjoint action}. Consider the action groupoid associated to this action. The space of arrows~${G\times\mathfrak{g}^\ast\simeq T^\ast G}$ has the canonical symplectic form, and $G\times\mathfrak{g}^\ast\rightrightarrows\mathfrak{g}^\ast$ is a symplectic groupoid by this symplectic form. In this case, the corresponding Poisson structure on the space of objects $\mathfrak{g}^\ast$ is the \textit{linear Poisson structure}, which is defined by
 $\{f,g\}(\xi)=\xi([{\rm d}f_\xi,{\rm d}g_\xi])$
 for $f,g\in C^\infty(\mathfrak{g}^\ast)$ and $\xi\in\mathfrak{g}^\ast$, where $[\cdot,\cdot]$ is the Lie bracket of $\mathfrak{g}$ and we consider ${\rm d}f_\xi,{\rm d}g_\xi\colon \mathfrak{g}^\ast\to\mathbb{R}$ identifying $T_\xi\mathfrak{g}^\ast$ with $\mathfrak{g}^\ast$.
\end{Example}

The notion of a cosymplectic groupoid is defined in exactly the same way as that of a symplectic groupoid:
\begin{Definition}
 A \textit{cosymplectic groupoid} is a triplet $(G_1\rightrightarrows G_0, \eta_{G_1}, \omega_{G_1})$ of a Lie groupoid and a cosymplectic structure on $G_1$ such that
 \[m^\ast\eta_{G_1}=\textrm{pr}_1^\ast\eta_{G_1}+\textrm{pr}_2^\ast\eta_{G_1}\qquad m^\ast\omega_{G_1}=\textrm{pr}_1^\ast\omega_{G_1}+\textrm{pr}_2^\ast\omega_{G_1}\]
 holds.
\end{Definition}

\begin{Example}
 Let $G_1\rightrightarrows G_0$ be a Lie groupoid and $G$ an abelian Lie group. Then a pair $({P\rightrightarrows G_0},\allowbreak (p,\textrm{id}_{G_0}))$ of a Lie groupoid $P\rightrightarrows G_0$ and a morphism $(p,\textrm{id}_{G_0})\colon (P\rightrightarrows G_0)\!\to\!{(G_1\rightrightarrows G_0)}$ is called a \textit{central extension} of $G_1\rightrightarrows G_0$ by $G$ when $G$ acts on $P$ and the map $p\colon P\to G_1$ is a~principal $G$-bundle.

 For any symplectic groupoid $(G_1\rightrightarrows G_0, \omega_{G_1})$, let us consider a central extension $({P\rightrightarrows G_0},\allowbreak (p,\textrm{id}_{G_0}))$ by $G=\mathbb{R}$ or $G=\mathbb{S}^1$. Let $\eta_P$ be a multiplicative, flat connection form of the principal bundle $p\colon P\to G_1$. Then $(P\rightrightarrows G_0,\eta_P,\omega_P)$ is a cosymplectic groupoid, where~${\omega_P=p^\ast\omega_{G_1}}$. In~particular, the trivial $\mathbb{R}$-central extension $(G_1\times\mathbb{R}\rightrightarrows G_0,\textrm{pr}_\mathbb{R}^\ast {\rm d}t,\textrm{pr}_{G_1}^\ast\omega_{G_1})$, where $\textrm{pr}$ denotes the projections, is a cosymplectic groupoid.
\end{Example}

The space of arrows of a cosymplectic groupoid has a symplectic foliation defined by the distribution $\operatorname{Ker}\eta$ and there is a distinguished symplectic leaf:
\begin{Theorem}[\cite{fernandes2023cosymplectic}]\label{thm:1}
 Let $\mathcal{G}=(G_1\rightrightarrows G_0)$ be a cosymplectic groupoid. Then any unit arrow in~$G_1$ belongs to the same symplectic leaf $S_{G_1}$. Moreover, $S_\mathcal{G}:=(S_{G_1}\rightrightarrows G_0)$ is a Lie subgroupoid of $\mathcal{G}$ and it is a symplectic groupoid.
\end{Theorem}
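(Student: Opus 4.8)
The plan is to read off everything from the two multiplicativity identities together with the description of the symplectic foliation from Section~\ref{cosymplectic}. First I would record the standard fact that a multiplicative form vanishes on units. Applying the section $g\mapsto(1_{t(g)},g)$ of $\mathrm{pr}_2$ to the identity $m^\ast\theta=\mathrm{pr}_1^\ast\theta+\mathrm{pr}_2^\ast\theta$ (for $\theta=\eta_{G_1}$ or $\theta=\omega_{G_1}$), and using $m(1_{t(g)},g)=g$, $\mathrm{pr}_1\circ(1_{t(\cdot)},\mathrm{id})=u\circ t$, yields $\theta=t^\ast u^\ast\theta+\theta$, hence $u^\ast\theta=0$ because $t$ is a submersion. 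Thus $u^\ast\eta_{G_1}=0$, so $u(G_0)$ is everywhere tangent to the integrable distribution $\operatorname{Ker}\eta_{G_1}$; as $G_0$ is connected, $u(G_0)$ lies in a single leaf, which I take as the definition of $S_{G_1}$. Recall from Section~\ref{cosymplectic} that this leaf carries the symplectic form $\omega_{G_1}|_{S_{G_1}}$ and that $TS_{G_1}=\operatorname{Ker}\eta_{G_1}|_{S_{G_1}}$.

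Closure under inversion comes from the same identity fed with the section $g\mapsto\bigl(i(g),g\bigr)$: since $m\circ(i,\mathrm{id})=u\circ s$ and $u^\ast\eta_{G_1}=0$, one gets $0=i^\ast\eta_{G_1}+\eta_{G_1}$, i.e.\ $i^\ast\eta_{G_1}=-\eta_{G_1}$. Hence $i$ preserves $\operatorname{Ker}\eta_{G_1}$ and maps leaves to leaves, and as it fixes every unit it satisfies $i(S_{G_1})=S_{G_1}$.

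The heart of the argument is closure under multiplication, $m\bigl(S_{G_1}\subscripts{s}{\times}{t}S_{G_1}\bigr)\subseteq S_{G_1}$. Infinitesimally this is immediate: on $G_1\subscripts{s}{\times}{t}G_1$ the distribution $D:=\operatorname{Ker}\mathrm{pr}_1^\ast\eta_{G_1}\cap\operatorname{Ker}\mathrm{pr}_2^\ast\eta_{G_1}$ satisfies $\eta_{G_1}\bigl(dm(v,w)\bigr)=\eta_{G_1}(v)+\eta_{G_1}(w)=0$ for $(v,w)\in D$, so $dm$ maps $D$ into $\operatorname{Ker}\eta_{G_1}$ and therefore sends each integral leaf of $D$ into a single leaf of $\operatorname{Ker}\eta_{G_1}$. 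Because $TS_{G_1}=\operatorname{Ker}\eta_{G_1}|_{S_{G_1}}$, the $D$-leaf through a unit pair $(1_\xi,1_\xi)$ is exactly a connected component of $S_{G_1}\subscripts{s}{\times}{t}S_{G_1}$, and $m$ carries it into the leaf through $m(1_\xi,1_\xi)=1_\xi$, namely $S_{G_1}$. Concretely, for composable $g,h\in S_{G_1}$ I would pick a path in $S_{G_1}\subscripts{s}{\times}{t}S_{G_1}$ from $\bigl(1_{s(g)},1_{s(g)}\bigr)$ to $(g,h)$; every such path is automatically $D$-tangent, so its image under $m$ is a path in $\operatorname{Ker}\eta_{G_1}$ from $1_{s(g)}$ to $gh$, forcing $gh\in S_{G_1}$. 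The genuine obstacle is precisely this connectivity input: one must know that a composable pair of leaf elements can be joined to a unit pair inside $S_{G_1}\subscripts{s}{\times}{t}S_{G_1}$. This is where the conceptual content sits, and it is most transparently packaged by the remark that wherever $\eta_{G_1}$ has a primitive $F$, multiplicativity plus $u^\ast\eta_{G_1}=0$ make $F$ a groupoid morphism to $(\mathbb{R},+)$ with $F\circ u=0$, exhibiting $S_{G_1}=F^{-1}(0)$ as the kernel of a morphism and hence as a subgroupoid.

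Finally I would promote $S_{G_1}$ to a Lie subgroupoid and then to a symplectic groupoid. It is an immersed leaf, hence an immersed submanifold; along $u(G_0)$ the relation $s\circ u=\mathrm{id}$ together with $du(TG_0)\subseteq\operatorname{Ker}\eta_{G_1}=TS_{G_1}$ gives $ds\bigl(\operatorname{Ker}\eta_{G_1}\bigr)=TG_0$ at units, so $s|_{S_{G_1}}$ and $t|_{S_{G_1}}$ are submersions there, and hence (by translating with elements of $S_{G_1}$, as for any Lie groupoid) everywhere. Thus $S_{\mathcal{G}}=(S_{G_1}\rightrightarrows G_0)$ is a Lie subgroupoid. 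The restriction $\omega_{G_1}|_{S_{G_1}}$ is the symplectic form of the leaf, and restricting $m^\ast\omega_{G_1}=\mathrm{pr}_1^\ast\omega_{G_1}+\mathrm{pr}_2^\ast\omega_{G_1}$ to $S_{G_1}\subscripts{s}{\times}{t}S_{G_1}$ shows it is multiplicative, so $\bigl(S_{G_1}\rightrightarrows G_0,\ \omega_{G_1}|_{S_{G_1}}\bigr)$ is a symplectic groupoid.
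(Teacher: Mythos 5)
First, a point of comparison: the paper does not actually prove Theorem~\ref{thm:1} --- it is imported from \cite{fernandes2023cosymplectic} with a citation only --- so your attempt has to stand as a self-contained proof rather than be matched against an in-paper argument. Much of it does stand: the pullback-by-sections computation giving $u^\ast\eta_{G_1}=u^\ast\omega_{G_1}=0$ and $i^\ast\eta_{G_1}=-\eta_{G_1}$ is correct, so (granting connectedness of $G_0$, which you rightly make explicit --- it is genuinely needed, as a disjoint union of two cosymplectic groupoids shows) all units lie in a single leaf $S_{G_1}$ and $i(S_{G_1})=S_{G_1}$. The submersivity of $s|_{S_{G_1}}$ and the multiplicativity of $\omega_{G_1}|_{S_{G_1}}$ are also fine; your ``translation'' gloss can be made precise by noting that for $v\in\operatorname{Ker}{\rm d}s_g$ one has ${\rm d}R_h(v)={\rm d}m(v,0)$ and hence $\eta_{G_1}({\rm d}R_h(v))=\eta_{G_1}(v)$, so $\eta_{G_1}$ is nonvanishing on every source fiber, not just at units.

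The genuine gap is exactly where you say the ``conceptual content sits'', and you do not close it: closure of $S_{G_1}$ under multiplication. Your argument needs every composable pair $(g,h)\in S_{G_1}\times_{s,t}S_{G_1}$ to be joinable to a unit pair \emph{inside} $S_{G_1}\times_{s,t}S_{G_1}$, and nothing you establish controls the component structure of this fiber product: $S_{G_1}$ is only an immersed leaf, its intersections with source and target fibers can be disconnected, and the fiber product can have components containing no unit pair; on such a component your argument shows only that $m$ maps it into \emph{some} leaf, not into $S_{G_1}$. The proposed repair via a primitive $F$ of $\eta_{G_1}$ does not rescue this. First, $\eta_{G_1}$ is closed but in general not exact --- in the paper's own example of flat $\mathbb{S}^1$-central extensions of symplectic groupoids, $\eta_{G_1}$ is a flat principal connection form with nonzero periods --- so no global $F$ exists precisely in the cases where the leaf has nontrivial holonomy and the statement has real content. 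Second, even when $\eta_{G_1}={\rm d}F$ globally, multiplicativity only gives that $F\circ m-F\circ\mathrm{pr}_1-F\circ\mathrm{pr}_2$ is \emph{locally} constant on $G_1\times_{s,t}G_1$; pinning down its value on a component containing no unit pair is the very same connectivity problem you started with, so the ``kernel of a morphism'' packaging is circular. The inclusion $m\bigl(S_{G_1}\times_{s,t}S_{G_1}\bigr)\subseteq S_{G_1}$ is the mathematical heart of the theorem, it is the step where the proof in \cite{fernandes2023cosymplectic} must use more of the multiplicative structure than the infinitesimal identity ${\rm d}m(D)\subseteq\operatorname{Ker}\eta_{G_1}$, and in your write-up it remains an unproven assertion.
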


\section{Actions of cosymplectic groupoids}\label{action}

The notion of an action of a Lie groupoid on a manifold $M$ is a generalization of the situation where an action of Lie group $G$ on $M$ and an equivariant map $\rho\colon M\to G_0$, where $G_0$ is another manifold on which $G$ acts, is given.
\begin{Definition}
 Let $\mathcal{G}=(G_1\rightrightarrows G_0)$ be a Lie groupoid and $M$ be a manifold. A \textit{left action} of~$\mathcal{G}$ on $M$ is a pair $(\rho,\Phi)$ of smooth maps $\rho\colon M\to G_0$ and $\Phi\colon G_1\subscripts{s}{\times}{\rho}M\to M$ which satisfies the following conditions:
 \begin{enumerate}\itemsep=0pt
 \item[(1)] $\rho(\Phi(g,x))=t(g)$ when $(g,x)\in G_1\subscripts{s}{\times}{\rho}M$,

 \item[(2)] $\Phi(g,\Phi(h,x))=\Phi(gh,x)$ when $(g,h)\in G_1\subscripts{s}{\times}{t}G_1$, $ (h,x)\in G_1\subscripts{s}{\times}{\rho}M$,

 \item[(3)] $\Phi(1_{\rho(x)},x)=x$ for any $x\in M$.
 \end{enumerate}

 Hereinafter, we simply denote $\Phi(g,x)$ by $gx$ and refer to $M$ as \textit{left $\mathcal{G}$-module}. The map $\rho\colon M\to G_0$ is called a \textit{momentum map}. A right action of $\mathcal{G}$ on $M$ is also defined similarly, by swapping the role of the source map and the target map.
\end{Definition}

A left $\mathcal{G}$-action on $M$ (or a left $\mathcal{G}$-module $M$) is said to be
\begin{itemize}\itemsep=0pt
 \item \textit{free} if $gx=x$ (for some $x\in M$ such that $(g,x)\in G_1\subscripts{s}{\times}{\rho}M$) implies $g=1_{\rho(x)}$,

 \item \textit{proper} if a map $G_1\subscripts{s}{\times}{\rho}M\to M\times M;\ (g,x)\mapsto(gx,x)$ is proper.
\end{itemize}The orbit space $\mathcal{G}\backslash M$ of a free and proper Lie groupoid action is a smooth manifold and the quotient map $M\to\mathcal{G}\backslash M$ is a submersion. Then one also calls $M\to\mathcal{G}\backslash M$ a \textit{principal $\mathcal{G}$-bundle} (see~\cite{moerdijk2003introduction}). In particular, for any regular value $\xi\in G_0$ of $\rho$, the isotropy Lie group~${\mathcal{G}_\xi=s^{-1}(\xi)\cap t^{-1}(\xi)}$ smoothly acts on $\rho^{-1}(\xi)$, and the quotient map $\rho^{-1}(\xi)\to\mathcal{G}_\xi\backslash\rho^{-1}(\xi)$ is a~submersion to the smooth quotient space.

Let $(\mathcal{G},\omega_{G_1})$ be a symplectic groupoid and $(M,\omega)$ a symplectic manifold. A left $\mathcal{G}$-action on~$M$ (or a left $\mathcal{G}$-module $M$) is said to be \textit{symplectic} if the graph of the action, i.e.,
\[\{(g,x,gx)\in G_1\times M\times M\mid (g,x)\in G_1\subscripts{s}{\times}{\rho}M\}\]
is a Lagrangian submanifold of $(G_1\times M\times M,\omega_{G_1}+\omega_1-\omega_2)$, where $\omega_i$ denotes the symplectic structure of $i$-th $M$.

\begin{Remark}
 The condition that the graph is a Lagrangian submanifold is grounded in Weinstein's ``symplectic creed'' \cite{weinstein1981symplectic} philosophy.
\end{Remark}

\begin{Example}\label{ex:1}
 Let $G_0$ be a manifold and $G$ a Lie group acting on $G_0$ from left. Consider the action groupoid $\mathcal{G}=(G\times G_0\rightrightarrows G_0)$. Then we obtain a correspondence
 \[\{\text{left}\ \mathcal{G}\text{-action on}\ M\}\overset{\text{$1\!:\!1$}}
 \longleftrightarrow\{\text{left}\ G\text{-action on}\ M\ \text{with a equivariant map} \ \rho\colon M\to G_0\}\]
 by a formula $gx=(g,\rho(x))x$, where the left side means the action of $g\in G$ on $x\in M$ and the right side means the action of $(g,\rho(x))\in G\times G_0$ on $x$. Moreover, when $\mathcal{G}$ is $G\times\mathfrak{g}^\ast\rightrightarrows\mathfrak{g}^\ast$ and~$M$ has a symplectic form $\omega$, we have a correspondence
 \[\{\text{symplectic left}\ \mathcal{G}\text{-action on}\ (M,\omega)\}\overset{\text{$1\!:\!1$}}\longleftrightarrow\{\text{Hamiltonian left}\ G\text{-action on}\ (M,\omega)\}\]
 (see \cite{crainic2021lectures}, for example).
\end{Example}

In order to define the notion of a cosymplectic groupoid action, we need to consider an analogue of Lagrangian submanifolds.

Let $(M,\eta,\omega)$ be a cosymplectic manifold and $N\subset M$ a submanifold. Then we call $N$ a~\textit{Lagrangian--Legendrian submanifold} or in short, \textit{LL submanifold} if
$T_p N\subset \operatorname{Ker}\eta_p$ (Legendrian property),
\smash{$(T_p N)^{\omega_p|_{\operatorname{Ker}\eta_p}}=T_p N$} (Lagrangian property)
holds for any $p\in N$, where \smash{$(T_p N)^{\omega_p|_{\operatorname{Ker}\eta_p}}$} denotes the orthogonal complement of $T_p N$ with respect to $\omega_p|_{\operatorname{Ker}\eta_p}$.

In fact, the notion of a LL submanifold is defined for \textit{almost} cosymplectic manifolds. In the case of contact manifolds, the definition of a LL submanifold coincides with that of a Legendrian submanifold.

\begin{Remark}
 An embedding $\iota\colon N\hookrightarrow M$ to a cosymplectic manifold $(M,\eta,\omega)$ is a LL sub\-mani\-fold if $\iota^\ast\eta=0$, $\iota^\ast\omega=0$ and $\dim M=2\dim N+1$ holds.
\end{Remark}

\begin{Lemma}
 Let $(M_1,\eta_1,\omega_1)$ and $(M_2,\eta_2,\omega_2)$ be two cosymplectic manifolds and $f\colon M_1\to M_2$ a diffeomorphism. Then $f$ is an isomorphism of cosymplectic manifolds if and only if the graph of $f$, i.e.,
 \[\Gamma:=\{(x,f(x),1)\in M_1\times M_2\times\mathbb{R}\mid x\in M_1\}\]
 is a LL submanifold of a cosymplectic manifold $(M_1\times M_2\times\mathbb{R},\eta,\omega)$, where
 \[\eta=\eta_1-\eta_2,\qquad\omega=\omega_1-\omega_2+\eta_1\wedge {\rm d}t.\]
\end{Lemma}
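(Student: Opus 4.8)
The plan is to realize $\Gamma$ as the image of the embedding $\iota\colon M_1\hookrightarrow M_1\times M_2\times\mathbb{R}$, $\iota(x)=(x,f(x),1)$, and to reduce the whole statement to a single pullback computation. Writing $\mathrm{pr}_1,\mathrm{pr}_2,\mathrm{pr}_\mathbb{R}$ for the three projections, I have $\mathrm{pr}_1\circ\iota=\mathrm{id}_{M_1}$, $\mathrm{pr}_2\circ\iota=f$, and $\mathrm{pr}_\mathbb{R}\circ\iota\equiv 1$. This last identity is the crucial one: since the third coordinate is frozen, $\iota^\ast(\mathrm{d}t)=\mathrm{d}(\mathrm{pr}_\mathbb{R}\circ\iota)=0$, so the term $\eta_1\wedge\mathrm{d}t$ contributes nothing and I obtain
\[\iota^\ast\eta=\eta_1-f^\ast\eta_2,\qquad \iota^\ast\omega=\omega_1-f^\ast\omega_2.\]
Since $f$ is a diffeomorphism, $\dim M_1=\dim M_2$, so $\dim(M_1\times M_2\times\mathbb{R})=2\dim M_1+1=2\dim\Gamma+1$, and the dimension hypothesis of the preceding Remark is automatic.

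For the forward implication I would assume $f^\ast\eta_2=\eta_1$ and $f^\ast\omega_2=\omega_1$. The displayed formulas then give $\iota^\ast\eta=0$ and $\iota^\ast\omega=0$, and together with the dimension count the preceding Remark immediately yields that $\Gamma$ is an LL submanifold.

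For the converse I would assume $\Gamma$ is LL and unwind the two defining conditions separately. The Legendrian property $T_p\Gamma\subset\operatorname{Ker}\eta_p$ is precisely the statement $\iota^\ast\eta=0$, which by the displayed formula forces $f^\ast\eta_2=\eta_1$. The Lagrangian property $(T_p\Gamma)^{\omega_p|_{\operatorname{Ker}\eta_p}}=T_p\Gamma$ in particular makes $T_p\Gamma$ isotropic, i.e.\ $\omega_p(u,v)=0$ for all $u,v\in T_p\Gamma$; since $T_p\Gamma\subset\operatorname{Ker}\eta_p$ this is exactly $\iota^\ast\omega=0$, and the displayed formula then gives $f^\ast\omega_2=\omega_1$. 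Hence $f$ is an isomorphism of cosymplectic manifolds.

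The argument is essentially bookkeeping, so I do not anticipate a serious obstacle; the only steps that need care are verifying that $\eta_1\wedge\mathrm{d}t$ genuinely drops out under $\iota^\ast$ (which rests on the constancy of the third coordinate) and, in the converse direction, correctly reading off the isotropy $\iota^\ast\omega=0$ from the self-orthogonality condition rather than from the full equality $(T_p\Gamma)^{\omega_p|_{\operatorname{Ker}\eta_p}}=T_p\Gamma$, whose dimensional half is already supplied by the dimension count.
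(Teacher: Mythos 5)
Your proof is correct and follows essentially the same route as the paper's: realize $\Gamma$ as the image of $\iota(x)=(x,f(x),1)$, compute $\iota^\ast\eta=\eta_1-f^\ast\eta_2$ and $\iota^\ast\omega=\omega_1-f^\ast\omega_2$ (the $\eta_1\wedge{\rm d}t$ term dying because the third coordinate is constant), and combine with the dimension count $2\dim\Gamma+1=\dim(M_1\times M_2\times\mathbb{R})$. If anything, you are slightly more careful than the paper in the converse direction, where you extract $\iota^\ast\eta=0$ and $\iota^\ast\omega=0$ directly from the Legendrian and isotropy conditions rather than invoking the preceding Remark (stated only as an ``if'') as an equivalence.
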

\begin{proof}
 The graph $\Gamma$ is the image of an embedding
 $\iota\colon M_1\to M_1\times M_2\times\mathbb{R}$, $\iota(x)=(x,f(x),1)$. Then we obtain
 \begin{gather*}
 \iota^\ast{\eta}=\iota^\ast(p_1^\ast\eta-p_2^\ast\eta)=\eta-f^\ast\eta,\qquad
 \iota^\ast{\omega}=\iota^\ast(p_1^\ast\omega-p_2^\ast\omega+(p_1^\ast\eta)\wedge {\rm d}q)=\omega-f^\ast\omega,
 \end{gather*}
 where $p_i$ and $q$ denotes projections to $M_i$ and $\mathbb{R}$, respectively. In addition, we have
 \[2\dim\Gamma+1=2\dim M_1+1=\dim(M_1\times M_2\times\mathbb{R}).\]
 Therefore, $\Gamma$ is a LL submanifold if and only if $f^\ast\eta_2=\eta_1,\ f^\ast\omega_2=\omega_1$.
\end{proof}

We can also rephrase the definition of a cosymplectic groupoid by using the notion of a LL submanifold.

\begin{Proposition}\label{prop:1}
 Let $\mathcal{G}=(G_1\rightrightarrows G_0)$ be a Lie groupoid and $(\eta,\omega)$ a cosymplectic structure on~$G_1$. Then a triplet $(\mathcal{G},\eta,\omega)$ is a cosymplectic groupoid if and only if
 the graph of the multiplication, i.e.,
 \[\Gamma:=\{(g,h,1,gh,1)\in G_1\times G_1\times\mathbb{R}\times G_1\times\mathbb{R}\mid (g,h)\in G_1\subscripts{s}{\times}{t}G_1\}\]
 is a LL submanifold of a cosymplectic manifold $(G_1\times G_1\times\mathbb{R}\times G_1\times\mathbb{R},\widetilde{\eta},\widetilde{\omega})$, where
 \begin{gather*}\widetilde{\eta}:=\eta_{1}+\eta_2-\eta_3,\qquad
 \widetilde{\omega}:=(\omega_{1}+\omega_2+\eta_{1}\wedge{\rm d}t_1)-\omega_3+(\eta_{1}+\eta_2)\wedge{\rm d}t_2
 \end{gather*}
 $(t_i$ denotes the coordinate of $i$-th $\mathbb{R}$ and $(\eta_i,\omega_i)$ denotes the cosymplectic structure of $i$-th $G_1)$.
\end{Proposition}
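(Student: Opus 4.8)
The plan is to run the same argument as in the preceding Lemma, now with the groupoid multiplication $m$ playing the role of the diffeomorphism $f$. First I would identify the ambient manifold: $(G_1\times G_1\times\mathbb{R}\times G_1\times\mathbb{R},\widetilde{\eta},\widetilde{\omega})$ is exactly the cosymplectic manifold obtained by first forming the product cosymplectic structure $\bigl(G_1\times G_1\times\mathbb{R},\,\eta_1+\eta_2,\,\omega_1+\omega_2+\eta_1\wedge{\rm d}t_1\bigr)$ on the first two copies of $G_1$, and then applying the construction of the preceding Lemma to this ``source'' and the third copy $(G_1,\eta_3,\omega_3)$ as ``target''. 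The graph $\Gamma$ is the image of the embedding
\[\iota\colon G_1\subscripts{s}{\times}{t}G_1\to G_1\times G_1\times\mathbb{R}\times G_1\times\mathbb{R},\qquad\iota(g,h)=(g,h,1,gh,1),\]
so I would test the LL property of $\Gamma$ through $\iota$, precisely as the Remark permits.

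The next step is the pullback computation. Writing $\textrm{pr}_1,\textrm{pr}_2\colon G_1\subscripts{s}{\times}{t}G_1\to G_1$ for the two projections and using that the $\mathbb{R}$-coordinates $t_1,t_2$ are constantly $1$ on $\Gamma$ (hence $\iota^\ast{\rm d}t_1=\iota^\ast{\rm d}t_2=0$, so the wedge terms drop out), one gets
\begin{gather*}
\iota^\ast\widetilde{\eta}=\textrm{pr}_1^\ast\eta+\textrm{pr}_2^\ast\eta-m^\ast\eta,\qquad
\iota^\ast\widetilde{\omega}=\textrm{pr}_1^\ast\omega+\textrm{pr}_2^\ast\omega-m^\ast\omega.
\end{gather*}
Thus $\iota^\ast\widetilde{\eta}=0$ is equivalent to the first multiplicativity identity and $\iota^\ast\widetilde{\omega}=0$ to the second.

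This immediately settles the ``if'' direction: if $\Gamma$ is a LL submanifold, the Legendrian property gives $\iota^\ast\widetilde{\eta}=0$, and the Lagrangian property (in particular isotropy, since $T\Gamma\subset\operatorname{Ker}\widetilde{\eta}$) gives $\iota^\ast\widetilde{\omega}=0$; both multiplicativity identities then hold, and as $(\eta,\omega)$ is cosymplectic by hypothesis, $(\mathcal{G},\eta,\omega)$ is a cosymplectic groupoid. For the ``only if'' direction, multiplicativity yields $\iota^\ast\widetilde{\eta}=\iota^\ast\widetilde{\omega}=0$, and to invoke the Remark it remains to check the dimension count $\dim\bigl(G_1\times G_1\times\mathbb{R}\times G_1\times\mathbb{R}\bigr)=2\dim\Gamma+1$. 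Since $\dim\Gamma=\dim\bigl(G_1\subscripts{s}{\times}{t}G_1\bigr)=2\dim G_1-\dim G_0$ and the ambient dimension is $3\dim G_1+2$, this reduces exactly to $\dim G_1=2\dim G_0+1$.

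Establishing that last equality is the only nonroutine point, and the main obstacle, because (unlike the diffeomorphism case of the Lemma, where $\dim M_1=\dim M_2$ made the count automatic) it is not formal here. I would supply it from Theorem~\ref{thm:1}: for a cosymplectic groupoid the distinguished leaf $S_{G_1}$ is a codimension-one symplectic leaf of $G_1$ and $S_\mathcal{G}=(S_{G_1}\rightrightarrows G_0)$ is a symplectic groupoid, so $\dim S_{G_1}=2\dim G_0$ and hence $\dim G_1=\dim S_{G_1}+1=2\dim G_0+1$. With the dimension condition in hand, the Remark gives that $\Gamma$ is a LL submanifold, completing the ``only if'' direction; everything else is a direct transcription of the Lemma's proof.
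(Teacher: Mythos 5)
Your proof is correct and takes essentially the same approach as the paper: the same embedding $\iota(g,h)=(g,h,1,gh,1)$, the same pullback computation identifying $\iota^\ast\widetilde{\eta}$ and $\iota^\ast\widetilde{\omega}$ with the multiplicativity defects, and the same dimension count via $\dim G_1=2\dim G_0+1$. The only (minor) divergence is that the paper cites this dimension identity directly from \cite{fernandes2023cosymplectic}, whereas you derive it from Theorem~\ref{thm:1} using the codimension-one leaf $S_{G_1}$ and the standard fact that a symplectic groupoid has total dimension twice the base; this is valid, and you correctly invoke it only in the direction where $\mathcal{G}$ is assumed to be a cosymplectic groupoid.
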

\begin{proof}
 The graph $\Gamma$ is the image of an embedding $\iota\colon G_1\subscripts{s}{\times}{t}G_1\to G_1\times G_1\times\mathbb{R}\times G_1\times\mathbb{R}$ given by $\iota(g,h)=(g,h,1,gh,1)$. Then we obtain
 \begin{gather*}\iota^\ast\widetilde{\eta}=\iota^\ast(p_1^\ast\eta+p_2^\ast\eta-p_3^\ast\eta)={\rm pr}_1^\ast\eta+{\rm pr}_2^\ast\eta-m^\ast\eta,\\
 \iota^\ast\widetilde{\omega}=\iota^\ast(p_1^\ast\omega+p_2^\ast\omega-p_3^\ast\omega+(p_1^\ast\eta)\wedge {\rm d}q_1+(p_1^\ast\eta+p_2^\ast\eta)\wedge{\rm d}q_2)={\rm pr}_1^\ast\omega+{\rm pr}_2^\ast\omega-m^\ast\omega,
 \end{gather*}
 where $p_i$ and $q_i$ denotes projections to $i$-th $G_1$ and $i$-th $\mathbb{R}$, respectively, and ${{\rm pr}_i\colon G_1\subscripts{s}{\times}{t}G_1\to G_1}$ also denotes projections. Hence the multiplicativity of $\eta$ and $\omega$ is equivalent to $\iota^\ast\widetilde{\eta}=0$ and~${\iota^\ast\widetilde{\omega}=0}$, respectively. In addition, we have
$\dim\Gamma=\dim(G_1\subscripts{s}{\times}{t}G_1)=2\dim G_1-\dim G_0$,
 and since $\dim G_1=2\dim G_0+1$ (see \cite{fernandes2023cosymplectic}), we obtain
 \[
 2\dim\Gamma+1=4\dim G_1-2\dim G_0+1=3\dim G_1+2=\dim(G_1\times G_1\times\mathbb{R}\times G_1\times\mathbb{R}).
\]
Therefore, the multiplicativity condition is equivalent to $\Gamma$ being a LL submanifold.
\end{proof}

Now we can define a notion of a cosymplectic groupoid action.

\begin{Definition}
 Let $(\mathcal{G}=(G_1\rightrightarrows G_0),\eta_{G_1},\omega_{G_1})$ be a cosymplectic groupoid and $(M,\eta,\omega)$ a~cosymplectic manifold. A left $\mathcal{G}$-action on $M$ (or a left $\mathcal{G}$-module $M$) is said to be \textit{cosymplectic} if the following conditions are satisfied:
 \begin{enumerate}\itemsep=0pt
 \item[(1)] The momentum map $\rho\colon M\to G_0$ of the action satisfies ${\rm d}\rho(R)=0$, where $R$ is the Reeb vector field of $(M,\eta,\omega)$.
 \item[(2)] The graph of the action, i.e.,
 \[\Gamma:=\{(g,x,1,gx,1)\in G_1\times M\times\mathbb{R}\times M\times\mathbb{R}\mid (g,x)\in G_1\subscripts{s}{\times}{\rho}M\}\]
 is a LL submanifold of a cosymplectic manifold $(G_1\times M\times\mathbb{R}\times M\times\mathbb{R},\widetilde{\eta},\widetilde{\omega})$, where
 \[\widetilde{\eta}:=\eta_{G_1}+\eta_1-\eta_2,\qquad\widetilde{\omega}:=(\omega_{G_1}+\omega_1+\eta_{G_1}\wedge {\rm d}t_1)-\omega_2+(\eta_{G_1}+\eta_1)\wedge {\rm d}t_2\]($t_i$ denotes the coordinate of $i$-th $\mathbb{R}$ and $(\eta_i,\omega_i)$ denotes the cosymplectic structure of $i$-th~$M$).
 \end{enumerate}
\end{Definition}

\begin{Remark}
 Condition (1) means that the Reeb vector field $R$ lies in the direction of the fibers of the momentum map $\rho$, and this will later be necessary for constructing a cosymplectic structure on our reduced space.
\end{Remark}

The following proposition is essentially used in Section~\ref{Theorem} for the proof of our main theorem.

\begin{Proposition}\label{prop:2}
 Let $(\mathcal{G}=(G_1\rightrightarrows G_0),\eta_{G_1},\omega_{G_1})$ be a cosymplectic groupoid, $(M,\eta,\omega)$ a cosymplectic left $\mathcal{G}$-module and $(\rho,\Phi)$ its action maps. Let $S_\mathcal{G}=(S_{G_1}\rightrightarrows G_0)$ be the symplectic subgroupoid obtained by Theorem~{\rm\ref{thm:1}}. Then any symplectic leaf $S$ of $(M,\eta,\omega)$ is a symplectic left $S_\mathcal{G}$-module by action maps
 \[\rho|_S\colon \ S\to G_0,\qquad
 \Phi|_{S_{G_1}\subscripts{s}{\times}{\rho} S}\colon
\ S_{G_1}\subscripts{s}{\times}{\rho} S\to S.\]
\end{Proposition}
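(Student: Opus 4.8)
The plan is to verify two things: that the restricted maps $\bigl(\rho|_S,\ \Phi|_{S_{G_1}\subscripts{s}{\times}{\rho} S}\bigr)$ genuinely define a smooth left $S_\mathcal{G}$-action on $S$, and that this action is symplectic, i.e., its graph is Lagrangian in $\bigl(S_{G_1}\times S\times S,\ \omega_{G_1}|_{S_{G_1}}+\omega|_S-\omega|_S\bigr)$. The recurring input for both parts is the infinitesimal content of the graph $\Gamma$ of the ambient action being an LL submanifold, so I would first record it. Along any smooth curve $(g(u),x(u))$ in $S_{G_1}\subscripts{s}{\times}{\rho} S$, write $\dot g,\dot x$ for the velocities and $w$ for the velocity of $u\mapsto g(u)x(u)$ in $M$. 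Since the $\mathbb{R}$-coordinates are constant on $\Gamma$, the forms ${\rm d}t_1,{\rm d}t_2$ pull back to zero, so the Legendrian condition $\iota^\ast\widetilde\eta=0$ becomes $\eta_{G_1}(\dot g)+\eta(\dot x)-\eta(w)=0$, and the Lagrangian condition $\iota^\ast\widetilde\omega=0$ becomes $\omega_{G_1}(\dot g_1,\dot g_2)+\omega(\dot x_1,\dot x_2)-\omega(w_1,w_2)=0$ for any pair of tangent vectors with the indicated components.

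For the first part, the associativity, unit, and source/target compatibilities are all inherited by restriction, using that $S_\mathcal{G}$ is a subgroupoid of $\mathcal{G}$ by Theorem~\ref{thm:1}; the only substantive point is well-definedness, namely that $gx\in S$ whenever $g\in S_{G_1}$, $x\in S$, $s(g)=\rho(x)$. Here I would use that $S_{G_1}$ and $S$ are symplectic leaves, hence integral manifolds of the respective kernels, so $T S_{G_1}=\operatorname{Ker}\eta_{G_1}|_{S_{G_1}}$ and $TS=\operatorname{Ker}\eta|_S$. Consequently $\eta_{G_1}(\dot g)=\eta(\dot x)=0$ along any curve in $S_{G_1}\subscripts{s}{\times}{\rho} S$, and the Legendrian identity above forces $w\in\operatorname{Ker}\eta$. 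Thus $u\mapsto g(u)x(u)$ remains in a single symplectic leaf of $M$; since $\Phi\bigl(1_{\rho(x)},x\bigr)=x\in S$, it suffices to connect $(g,x)$ to the unit point $\bigl(1_{\rho(x)},x\bigr)$ by a curve inside $S_{G_1}\subscripts{s}{\times}{\rho} S$.

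The main obstacle is exactly this globalization: the infinitesimal argument only transports the leaf along connected paths, so I need $S_{G_1}\subscripts{s}{\times}{\rho} S$ to be path-connected to the unit slice $\bigl\{\bigl(1_{\rho(x)},x\bigr):x\in S\bigr\}\cong S$. Since $s|_{S_{G_1}}$ is a submersion, this fibered product is automatically a manifold, and it fibers over the connected leaf $S$ via $(g,x)\mapsto x$ with fibers the source fibers $s^{-1}(\rho(x))\cap S_{G_1}$ of $S_\mathcal{G}$; the reduction therefore rests on source-connectedness of $S_\mathcal{G}$, which I would draw from its construction in Theorem~\ref{thm:1}. As a sanity check, $\dim\bigl(S_{G_1}\subscripts{s}{\times}{\rho} S\bigr)=2\dim G_0+\dim S-\dim G_0=\dim G_0+\dim S$, which is exactly half of $\dim(S_{G_1}\times S\times S)$.

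Once $\Phi$ is known to restrict, the second part is short. The graph $\Gamma_S=\{(g,x,gx)\}$ embeds in $S_{G_1}\times S\times S$ with dimension $\dim G_0+\dim S$, half of $\dim(S_{G_1}\times S\times S)=2\dim G_0+2\dim S$. Isotropy is immediate from the Lagrangian identity already recorded: for $g\in S_{G_1}$ the restriction $\omega_{G_1}|_{S_{G_1}}$ is the symplectic form of $S_\mathcal{G}$, while for $x$ and $gx$ lying in $S$ the restriction $\omega|_S$ is the leaf symplectic form and each $w_i\in T(S)$, so $\iota^\ast\widetilde\omega=0$ specializes to the vanishing of $\omega_{G_1}|_{S_{G_1}}+\omega|_S-\omega|_S$ on $T\Gamma_S$. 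Half-dimensionality together with isotropy gives that $\Gamma_S$ is Lagrangian, which is precisely the assertion that $S$ is a symplectic left $S_\mathcal{G}$-module.
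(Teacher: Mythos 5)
Your proposal is correct and follows essentially the same route as the paper's proof: the Legendrian identity along curves in $S_{G_1}\subscripts{s}{\times}{\rho}S$ (using $TS_{G_1}\subset\operatorname{Ker}\eta_{G_1}$, $TS\subset\operatorname{Ker}\eta$) shows the action preserves the leaf $S$, and the Lagrangian identity together with the half-dimension count shows the restricted graph is Lagrangian. If anything, you are more explicit than the paper on the two points it treats tersely --- the existence of a path joining $(g,x)$ to the unit slice (the paper simply posits such a path, which as you note rests on connectivity of the fibered product over the connected leaf $S$) and the dimension count behind ``isotropic $+$ half-dimensional $=$ Lagrangian.''
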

\begin{proof}
 Firstly, we see that the Legendrian property of the graph $\Gamma$ of the action $(\rho,\Phi)$ implies~${\Phi(S_{G_1}\subscripts{s}{\times}{\rho} S)\subset S}$. Let $(g,x)\in S_{G_1}\subscripts{s}{\times}{\rho} S$ and $(g(t),x(t))$ be a smooth path in $S_{G_1}\subscripts{s}{\times}{\rho} S$ whose starting point is $(1_{\rho(x)},x)$ and ending point is $(g,x)$. Then we obtain a smooth path $({g}(t),{x}(t),1,{(gx)}(t),1)$ in $\Gamma$ and
 \[0=\widetilde{\eta}(\dot{g}(t),\dot{x}(t),0,(\dot{gx})(t),0)=\eta_{G_1}(\dot{g}(t))+\eta(\dot{x}(t))-\eta((\dot{gx})(t))=-\eta((\dot{gx})(t))\]holds. Therefore, two points $x=1_{\rho(x)}x$ and $gx$ are in the same symplectic leaf $S$.

 Secondly, we see that the Lagrangian property of the graph $\Gamma$ implies that the restricted action $(\rho|_S,\Phi|_{S_{G_1}\subscripts{s}{\times}{\rho} S})$ is symplectic. Let $({g}(t),{x}(t))$ be a smooth path in $S_{G_1}\subscripts{s}{\times}{\rho} S$. Then we have
 \[0=\widetilde{\omega}(\dot{g}(t),\dot{x}(t),0,(\dot{gx})(t),0)=\omega_{G_1}(\dot{g}(t))+\omega(\dot{x}(t))-\omega((\dot{gx})(t)).\]In addition to this, taking the dimension count into consideration, we can see that the graph of the $S_\mathcal{G}$-action on $S$ is a Lagrangian submanifold.
\end{proof}

\section{Mikami--Weinstein type theorem}\label{Theorem}

Now we recall the statement of the Mikami--Weinstein theorem.\footnote{In \cite{xu2}, an alternative proof utilizing Morita equivalence was given.}

\begin{Theorem}[\cite{mikami1988moments}]\label{thm:2}
 Let $\mathcal{G}=(G_1\rightrightarrows G_0)$ be a symplectic groupoid and $M$ a symplectic, free and proper left $\mathcal{G}$-module with respect to a momentum map $\rho\colon M\to G_0$. Assume that $\xi\in\rho(M)$ is a regular value of $\rho$. Then $\mathcal{G}_\xi\backslash \rho^{-1}(\xi)$ is a symplectic manifold. Moreover, if $\rho$ is submersive, the family of symplectic manifolds \smash{$\big\{\mathcal{G}_\xi\backslash \rho^{-1}(\xi)\big\}_{\xi\in\rho(M)}$} is precisely the symplectic foliation of the Poisson manifold $\mathcal{G}\backslash M$.
\end{Theorem}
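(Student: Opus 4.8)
I would give the classical direct proof: establish smoothness of the quotient, compute pointwise the kernel of the restricted symplectic form, descend, and check nondegeneracy; for the second assertion I would produce the Poisson structure on $\mathcal{G}\backslash M$ from invariant functions and match its leaves with the reduced spaces. Since $\xi$ is a regular value, $\rho^{-1}(\xi)$ is an embedded submanifold of codimension $\dim G_0$, and by the discussion in Section~\ref{action} the isotropy group $\mathcal{G}_\xi$ acts freely and properly on it with smooth quotient, so $p\colon\rho^{-1}(\xi)\to\mathcal{G}_\xi\backslash\rho^{-1}(\xi)$ is a principal $\mathcal{G}_\xi$-bundle. Write $j\colon\rho^{-1}(\xi)\hookrightarrow M$ and $\omega_\xi:=j^\ast\omega$. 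The input from the symplectic-action hypothesis is the identity obtained by unpacking the Lagrangian-graph condition, namely $\Phi^\ast\omega=\textrm{pr}_{G_1}^\ast\omega_{G_1}+\textrm{pr}^\ast\omega$ on $G_1\subscripts{s}{\times}{\rho}M$, where $\textrm{pr}_{G_1},\textrm{pr},\Phi$ send $(g,x)$ to $g$, $x$, $gx$.

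\textbf{The null distribution of $\omega_\xi$.} This is the heart of the proof. Let $\mathfrak{a}\colon\rho^\ast\bigl(T^\ast G_0\bigr)\to TM$ denote the infinitesimal action (the Lie algebroid of a symplectic groupoid over $G_0$ is the cotangent algebroid $T^\ast G_0$), so $\operatorname{im}\mathfrak{a}_x=T_x(\mathcal{G}\cdot x)$. Differentiating the above identity along the groupoid directions (the path method of Proposition~\ref{prop:2}) yields the momentum-map equation $\iota_{\mathfrak{a}(\alpha)}\omega=\rho^\ast\alpha$ for $\alpha\in T^\ast_{\rho(x)}G_0$, together with ${\rm d}\rho\circ\mathfrak{a}=\pi_{G_0}^\sharp$, the anchor of $T^\ast G_0$ (equivalently the Poisson bivector of $G_0$). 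Using $\operatorname{Ann}\bigl(\ker{\rm d}\rho_x\bigr)=\operatorname{im}\bigl({\rm d}\rho_x^\ast\bigr)$ and nondegeneracy of $\omega$, the first identity gives
\[\bigl(\ker{\rm d}\rho_x\bigr)^{\omega}=\operatorname{im}\mathfrak{a}_x=T_x(\mathcal{G}\cdot x).\]
Intersecting with $\ker{\rm d}\rho_x$ and using ${\rm d}\rho\circ\mathfrak{a}=\pi_{G_0}^\sharp$ together with the identification $\ker\pi_{G_0,\xi}^\sharp=\operatorname{Lie}(\mathcal{G}_\xi)$ yields
\[\ker(\omega_\xi)_x=\ker{\rm d}\rho_x\cap\bigl(\ker{\rm d}\rho_x\bigr)^{\omega}=\mathfrak{a}_x\bigl(\ker\pi_{G_0,\xi}^\sharp\bigr)=T_x(\mathcal{G}_\xi\cdot x),\]
so the null distribution of $\omega_\xi$ is exactly the $\mathcal{G}_\xi$-orbit distribution.

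\textbf{Descent and the first assertion.} Restricting $\Phi^\ast\omega=\textrm{pr}_{G_1}^\ast\omega_{G_1}+\textrm{pr}^\ast\omega$ to $\{g\}\times\rho^{-1}(\xi)$ for fixed $g\in\mathcal{G}_\xi$ (where $\textrm{pr}_{G_1}$ is constant, so $\omega_{G_1}$ contributes nothing) shows $L_g^\ast\omega_\xi=\omega_\xi$; thus $\omega_\xi$ is $\mathcal{G}_\xi$-invariant, and being horizontal by the previous step it is basic, hence descends to a unique $\omega^\xi$ with $p^\ast\omega^\xi=\omega_\xi$. Injectivity of $p^\ast$ on forms gives ${\rm d}\omega^\xi=0$ from ${\rm d}\omega=0$, and since we have quotiented precisely by $\ker\omega_\xi$ the descended form is nondegenerate. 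Hence $\mathcal{G}_\xi\backslash\rho^{-1}(\xi)$ is symplectic.

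\textbf{The symplectic foliation.} Assume $\rho$ submersive, so $P:=\mathcal{G}\backslash M$ is smooth and $q\colon M\to P$ is a principal $\mathcal{G}$-bundle. Identifying $C^\infty(P)$ with $\mathcal{G}$-invariant functions, a function $f$ is invariant iff ${\rm d}f$ annihilates $T_x(\mathcal{G}\cdot x)=\bigl(\ker{\rm d}\rho_x\bigr)^{\omega}$, i.e.\ iff $X_f\in\ker{\rm d}\rho$; as $\ker{\rm d}\rho$ is involutive, $X_{\{f,h\}}=\pm[X_f,X_h]\in\ker{\rm d}\rho$, so $\{f,h\}$ is again invariant and $P$ inherits a Poisson structure with $q$ a Poisson submersion. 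Dualizing $\bigl(\ker{\rm d}\rho_x\bigr)^{\omega}=T_x(\mathcal{G}\cdot x)$ gives $\bigl(T_x(\mathcal{G}\cdot x)\bigr)^{\omega}=\ker{\rm d}\rho_x$, so the Hamiltonian fields of invariant functions span $\ker{\rm d}\rho_x$; therefore the characteristic distribution of $P$ at $q(x)$ is ${\rm d}q_x\bigl(\ker{\rm d}\rho_x\bigr)=\ker{\rm d}\rho_x/T_x(\mathcal{G}_\xi\cdot x)$, which is precisely the tangent space to the image of the natural injection $\mathcal{G}_\xi\backslash\rho^{-1}(\xi)\hookrightarrow P$. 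Thus each reduced space maps onto a symplectic leaf of $P$, these exhaust $P$ as $\xi$ ranges over $\rho(M)$, and the two symplectic forms agree because each computes $\omega(X_f,X_h)$. The main obstacle is the infinitesimal analysis of the second paragraph: deriving $\iota_{\mathfrak{a}(\alpha)}\omega=\rho^\ast\alpha$ and ${\rm d}\rho\circ\mathfrak{a}=\pi_{G_0}^\sharp$ from the Lagrangian-graph condition and identifying $\ker{\rm d}\rho\cap(\ker{\rm d}\rho)^{\omega}$ with the isotropy-orbit tangent space, since this is where the symplectic-groupoid structure, the momentum map and the action must be correctly intertwined; everything afterward is formal. (Xu's alternative, cited above, routes this through Morita equivalence.)
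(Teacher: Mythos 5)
You have reconstructed the original direct proof of this theorem, but note that the paper itself offers no proof to compare against: Theorem~\ref{thm:2} is quoted as background from Mikami--Weinstein \cite{mikami1988moments} (with a footnote pointing to Xu's alternative proof via Morita equivalence \cite{xu2}) and is then used as a black box, applied leafwise, in the proof of Theorem~\ref{thm:3}. Judged on its own, your outline is sound and is essentially the argument of \cite{mikami1988moments}: the two identities you isolate as the crux --- $\iota_{\mathfrak{a}(\alpha)}\omega=\rho^\ast\alpha$ (up to a sign depending on conventions) and ${\rm d}\rho\circ\mathfrak{a}=\pi_{G_0}^\sharp$, the anchor compatibility for the cotangent algebroid --- are exactly what unpacking the Lagrangian-graph condition yields, and your computation $\ker(\omega_\xi)_x=\ker{\rm d}\rho_x\cap\bigl(\ker{\rm d}\rho_x\bigr)^{\omega}=\mathfrak{a}_x\bigl(\ker\pi_{G_0,\xi}^\sharp\bigr)=T_x(\mathcal{G}_\xi\cdot x)$, combined with the slice argument giving $L_g^\ast\omega_\xi=\omega_\xi$, is the standard and correct route to the first assertion (note that freeness is not actually needed for $\operatorname{im}\mathfrak{a}_x=\bigl(\ker{\rm d}\rho_x\bigr)^{\omega}$; your $\omega^\flat$ argument already gives equality, while freeness and properness are what make the quotients smooth, as you correctly import from Section~\ref{action}). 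Two small points deserve care in a full write-up: (i) when orbits or isotropy groups are disconnected, ``${\rm d}f$ annihilates orbit tangents'' only gives local constancy along orbits, so the invariance of $\{f,h\}$ on $\mathcal{G}\backslash M$ is better checked directly from the graph identity rather than purely infinitesimally; (ii) the reduced spaces $\mathcal{G}_\xi\backslash\rho^{-1}(\xi)$ may be disconnected whereas symplectic leaves are connected by definition, so the final matching of the family with the symplectic foliation holds up to connected components, exactly as in \cite{mikami1988moments}. With those caveats your proposal is a correct reconstruction; the only genuinely different published route is Xu's Morita-equivalence proof, which the paper's footnote mentions but also does not reproduce.
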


\begin{Example}
 Consider the case of $\mathcal{G}=(G\times\mathfrak{g}^\ast\rightrightarrows\mathfrak{g}^\ast)$. For any $\xi\in\mathfrak{g}^\ast$, ${\mathcal{G}_\xi\simeq\{g\in G\mid{\rm Ad}^\ast_g\xi=\xi\}}$. Thus by the correspondence in Example~\ref{ex:1}, we can see that the Marsden--Weinstein--Meyer theorem is a special case of the Mikami--Weinstein theorem.
\end{Example}

The following is our main theorem.

\begin{Theorem}\label{thm:3}
 Let $\mathcal{G}=(G_1\rightrightarrows G_0)$ be a cosymplectic groupoid and $(M,\eta,\omega)$ a cosymplectic, free and proper left $\mathcal{G}$-module with respect to a momentum map $\rho\colon M\to G_0$. Assume that~${\xi\in\rho(M)}$ is a regular value of $\rho$. We denote $(S_{\mathcal{G}})_\xi\backslash \rho^{-1}(\xi)$ as $M^\xi$ and the quotient map as $p\colon\rho^{-1}(\xi)\to M^\xi$. Then $M^\xi$ admits a unique cosymplectic structure $\bigl(\eta^\xi,\omega^\xi\bigr)$ such that $p^\ast\eta^\xi=\eta|_{\rho^{-1}(\xi)}$ and $p^\ast\omega^\xi=\omega|_{\rho^{-1}(\xi)}$.
\end{Theorem}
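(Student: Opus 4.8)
The plan is to descend the two restricted forms $\eta|_{\rho^{-1}(\xi)}$ and $\omega|_{\rho^{-1}(\xi)}$ along $p$ and then verify that the resulting pair is cosymplectic. The pivotal intermediate claim is that both restrictions are \emph{basic}, that is, invariant under $(S_{\mathcal G})_\xi$ and annihilating the directions tangent to its orbits; write $\mathcal V_x$ for the tangent space to the $(S_{\mathcal G})_\xi$-orbit through $x$. Uniqueness is then automatic: $(S_{\mathcal G})_\xi$ is a closed Lie subgroup of $\mathcal G_\xi$ acting freely and properly, so $p$ is a surjective submersion and $p^\ast$ is injective on forms, whence at most one $\bigl(\eta^\xi,\omega^\xi\bigr)$ can satisfy the stated pullback identities.

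First I would prove invariance and horizontality directly from the Lagrangian--Legendrian condition on the graph $\Gamma$, imitating the computations in the proof of Proposition~\ref{prop:2}. Fixing $g\in(S_{\mathcal G})_\xi$ and differentiating $\Gamma$ only in the $M$-directions makes the $G_1$-component $\dot g$ vanish, so every term of $\widetilde\eta$ and $\widetilde\omega$ carrying a factor of $\eta_{G_1}$, $\omega_{G_1}$, ${\rm d}t_1$ or ${\rm d}t_2$ disappears and the identities $\iota^\ast\widetilde\eta=0$, $\iota^\ast\widetilde\omega=0$ collapse to $L_g^\ast\eta=\eta$ and $L_g^\ast\omega=\omega$ on $\rho^{-1}(\xi)$. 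For horizontality, let $V\in\mathcal V_x$ be generated by a path $g(t)\in(S_{\mathcal G})_\xi$ with $g(0)=1_{\rho(x)}$; differentiating $(g(t),x,1,g(t)x,1)$ and using the Legendrian identity gives $\eta(V)=\eta_{G_1}(\dot g)$, which vanishes because $(S_{\mathcal G})_\xi\subset S_{G_1}\subset\operatorname{Ker}\eta_{G_1}$. Pairing this vertical tangent against an arbitrary $w\in T_x\rho^{-1}(\xi)$, presented as the graph tangent $(0,w,0,w,0)$, in the Lagrangian identity leaves only the term $-\omega(V,w)$, so $\omega(V,w)=0$. Hence both restrictions are basic and descend to unique forms $\eta^\xi,\omega^\xi$ with the required pullbacks; they are closed since $p^\ast{\rm d}\eta^\xi={\rm d}\bigl(\eta|_{\rho^{-1}(\xi)}\bigr)=0$ and likewise for $\omega^\xi$.

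The hard part will be the non-degeneracy $\eta^\xi\wedge\bigl(\omega^\xi\bigr)^m\neq0$, and here I would exploit the symplectic foliation. Since ${\rm d}\rho(R)=0$, the Reeb field is tangent to $\rho^{-1}(\xi)$ and transverse to the leaves, giving $T_x\rho^{-1}(\xi)=\mathbb R R_x\oplus\bigl(\operatorname{Ker}\eta_x\cap T_x\rho^{-1}(\xi)\bigr)$ with $\mathcal V_x$ contained in the second summand. The same relation shows that ${\rm d}\rho_x|_{\operatorname{Ker}\eta_x}$ is already surjective, so $\xi$ is a regular value of $\rho|_S$ on each symplectic leaf $S$ through $x$; Proposition~\ref{prop:2} makes $S$ a free, proper, symplectic $S_{\mathcal G}$-module, and the Mikami--Weinstein theorem (Theorem~\ref{thm:2}) identifies the kernel of $\omega$ on $\operatorname{Ker}\eta_x\cap T_x\rho^{-1}(\xi)$ with \emph{exactly} $\mathcal V_x$. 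Therefore $\omega^\xi$ is non-degenerate on $\operatorname{Ker}\eta^\xi=\bigl(\operatorname{Ker}\eta_x\cap T_x\rho^{-1}(\xi)\bigr)/\mathcal V_x$, while the Reeb identities give $\eta^\xi(\overline R)=\eta(R)=1$ and $\omega^\xi(\overline R,-)=\omega(R,-)=0$ for $\overline R={\rm d}p(R)$. These three facts are precisely the pointwise conditions characterising an (automatically closed) cosymplectic structure, so $\eta^\xi\wedge\bigl(\omega^\xi\bigr)^m\neq0$. The one genuinely substantial point is the reverse inclusion $\ker\omega\subseteq\mathcal V_x$ inside the leaf, which is the content of the leafwise Mikami--Weinstein theorem and not a formal consequence of the Lagrangian--Legendrian condition alone.
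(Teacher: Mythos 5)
Your proof is correct, and it rests on the same essential inputs as the paper's: the computations extracted from the Lagrangian--Legendrian condition on the graph $\Gamma$, the leafwise symplectic actions from Proposition~\ref{prop:2}, the Mikami--Weinstein theorem (Theorem~\ref{thm:2}) applied leaf by leaf, and the splitting $T_x\rho^{-1}(\xi)=\mathbb{R}R_x\oplus\bigl(\operatorname{Ker}\eta_x\cap T_x\rho^{-1}(\xi)\bigr)$ coming from ${\rm d}\rho(R)=0$. What differs is the direction of the descent. The paper never shows that $\eta|_{\rho^{-1}(\xi)}$ and $\omega|_{\rho^{-1}(\xi)}$ are basic; instead it uses the graph identities to prove that the Reeb field itself is $(S_{\mathcal{G}})_\xi$-invariant, descends it to $R^\xi={\rm d}p(R)$, observes that the reduced leaves $S^\xi_i$ form a cooriented codimension-one symplectic foliation of $M^\xi$ (citing Moerdijk--Mr\v{c}un for smoothness of the quotient foliation), and then \emph{constructs} $\eta^\xi$ as the defining $1$-form of that foliation normalized by $\eta^\xi\bigl(R^\xi\bigr)=1$ and $\omega^\xi$ by the prescriptions $\omega^\xi\bigl(R^\xi,-\bigr)=0$, $\omega^\xi|_{S^\xi_i}=\omega_i$, checking the pullback identities afterwards. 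You run this in the opposite direction: you prove invariance and horizontality of the restricted forms directly from the vanishing of $\widetilde\eta$ and $\widetilde\omega$ on $T\Gamma$ (computations of exactly the same kind as in Proposition~\ref{prop:2}, though they do not appear in the paper), so that uniqueness and the identities $p^\ast\eta^\xi=\eta|_{\rho^{-1}(\xi)}$, $p^\ast\omega^\xi=\omega|_{\rho^{-1}(\xi)}$ come for free, leaving only pointwise non-degeneracy, which you correctly reduce to the leafwise Mikami--Weinstein kernel identification $\operatorname{Ker}\bigl(\omega|_{T_x(\rho|_S)^{-1}(\xi)}\bigr)=\mathcal{V}_x$. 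Your route buys a cleaner treatment of uniqueness and of the pullback identities and avoids the global statements about the reduced foliation; the paper's route buys a more explicit geometric picture of $M^\xi$ (its symplectic foliation and its Reeb field). One small caveat: your notation $\overline{R}={\rm d}p(R)$ tacitly assumes $R$ is projectable, which you never establish (the paper does, via the graph identities); but your argument only uses ${\rm d}p_x(R_x)$ pointwise, where $p^\ast\eta^\xi=\eta|_{\rho^{-1}(\xi)}$ and $p^\ast\omega^\xi=\omega|_{\rho^{-1}(\xi)}$ together with surjectivity of ${\rm d}p_x$ already give $\eta^\xi\bigl({\rm d}p_xR_x\bigr)=1$ and $\omega^\xi\bigl({\rm d}p_xR_x,-\bigr)=0$, so this is cosmetic rather than a gap.
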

\begin{proof}
 Let $\{S_i\}_{i\in I}$ be the symplectic foliation of $M$. Since the Reeb vector field $R$ of $M$ satisfies ${\rm d}\rho(R)=0$, each $S_i$ intersects transversely with $\rho^{-1}(\xi)$, and thus $(\rho|_{S_i})^{-1}(\xi)$ is a smooth manifold.

 By Proposition~\ref{prop:2}, the symplectic groupoid $S_\mathcal{G}$ acts on each leaf $S_i$ symplectically. Hence $\big\{S^\xi_i:=(S_{\mathcal{G}})_\xi\backslash (\rho|_{S_i})^{-1}(\xi)\big\}_{i\in I}$ forms a foliation on $M^\xi$ of codimension 1 (see \cite[Section 1.3]{moerdijk2003introduction}). In addition, we can apply Theorem~\ref{thm:2} on each leaf and thus \smash{$\big\{S^\xi_i\big\}_{i\in I}$} is a symplectic foliation on \smash{$M^\xi$}.

 Let $L_g\colon\rho^{-1}(\xi)\to\rho^{-1}(\xi)$ be the left action map by $g\in(S_{\mathcal{G}})_\xi$ and $x(t)$ a integral curve of $R$ in $\rho^{-1}(\xi)$. Then by the Legendrian property of the graph,
 \[\eta((L_g)_\ast R)=\eta((\dot{gx})(t))=\eta_{G_1}(0)+\eta(\dot{x}(t))=\eta(R)=1\]holds. Similarly, by the Lagrangian property of the graph, we have $\omega((L_g)_\ast R,-)=0$ and thus~$R$ is left invariant. Hence $R$ descends to a vector field ${R}^\xi:={\rm d}p(R)$ on the quotient $M^\xi$. The vector field ${R}^\xi$ is transverse to the symplectic foliation on $M^\xi$.

 The reduced foliation \smash{$\big\{S^\xi_i\big\}_{i\in I}$} is coorientable since \smash{$\{S_i\}_{i\in I}$} is. We choose a defining 1-form~$\eta^\xi$ of the foliation \smash{$\big\{S^\xi_i\big\}_{i\in I}$} such that $\eta^\xi\bigl(R^\xi\bigr)=1$ holds. Then we have \smash{$p^\ast\eta^\xi=\eta|_{\rho^{-1}(\xi)}$}. Let $\omega_i$ be the symplectic form on \smash{$S^\xi_i$}. Then we define a 2-form $\omega^\xi$ on $M^\xi$ by
 $\omega^\xi\bigl(R^\xi,-\bigr)=0$, \smash{$\omega^\xi|_{S^\xi_i}=\omega_i$}. Then we have \smash{$p^\ast\omega^\xi=\omega|_{\rho^{-1}(\xi)}$}. $\eta^\xi,\omega^\xi$ are closed since $\eta,\omega$ are closed and $p$ is a submersion.

 Moreover, since $\eta^\xi\bigl(R^\xi\bigr)\neq0$ and $\omega_i^n\neq0$, $\eta^\xi\wedge\bigl(\omega^\xi\bigr)^n$ is a volume form, where $n$ is an integer such that $\dim M^\xi=2n+1$. Therefore, a pair $\bigl(\eta^\xi,\omega^\xi\bigr)$ is a cosymplectic structure on $M^\xi$.
\end{proof}

\section{Examples}\label{examples}

In this section, we give two examples of Theorem~\ref{thm:3}.

\begin{Example}
 Let $\mathcal{G}=(G_1\rightrightarrows G_0)$ be a cosymplectic groupoid. Then $\mathcal{G}$ acts on $G_1$ by the multiplication of groupoid with $t\colon G_1\to G_0$ as the momentum map. This action is free, proper and cosymplectic. In fact, the graph of the action is a LL submanifold because of Proposition~\ref{prop:1}, and the Reeb vector field $R$ of $G_1$ satisfies $R\in \operatorname{Ker}{\rm d}t$ (see \cite{fernandes2023cosymplectic}).

 For any $\xi\in G_0$, the reduced cosymplectic manifold $(S_{\mathcal{G}})_\xi\backslash t^{-1}(\xi)$ is obtained by Theorem~\ref{thm:3}. Here the symplectic leaf $(S_{\mathcal{G}})_\xi\backslash (t|_{S_{G_1}})^{-1}(\xi)$ coincides with the $S_\mathcal{G}$-orbit in $G_0$ through $\xi$. We~can see that it is also a leaf of the symplectic foliation of a Poisson manifold $G_0$ by Theorem~\ref{thm:2}, and thus we have two foliated manifolds each having the orbit as a leaf.
\end{Example}

\begin{Example}
 Let $G$ be a Lie group acts on a cosymplectic manifold $(M,\eta,\omega)$ freely and properly. We assume that there is a momentum map $\mu\colon M\to\mathfrak{g}^\ast$ with respect to the action. Then let us consider a cosymplectic groupoid
 \[T^\ast G\times\mathbb{R}\simeq G\times\mathfrak{g}^\ast\times\mathbb{R}\rightrightarrows\mathfrak{g}^\ast\]
 (the trivial $\mathbb{R}$-central extension of a symplectic groupoid $T^\ast G\rightrightarrows\mathfrak{g}^\ast$).

 For any $\varepsilon>0$, we define
 \begin{gather*}
 M_\varepsilon=\{x\in M \mid \text{Reeb flow}\ \varphi_t(x)\ \text{is defined in}\ t\in[-\varepsilon,\varepsilon] \},\\
 \mathcal{G}_\varepsilon=(G\times\mathfrak{g}^\ast\times(-\varepsilon,\varepsilon)\rightrightarrows\mathfrak{g}^\ast).
 \end{gather*}
 In fact, although $\mathcal{G}_\varepsilon$ is not a Lie groupoid, it is a \textit{local Lie groupoid} (i.e., the composition of arrows is defined only in a neighborhood of the unit arrows) whose structure maps are the same as those of $G\times\mathfrak{g}^\ast\times\mathbb{R}\rightrightarrows\mathfrak{g}^\ast$, and the previously discussed concepts related to actions can also be applied to local Lie groupoids. We can define a cosymplectic $\mathcal{G}_\varepsilon$-action on $M_\varepsilon$ by
 \[(g,\xi,t)\cdot x:=\varphi_t(gx)\]
 for $(g,\xi,t)\in G\times\mathfrak{g}^\ast\times(-\varepsilon,\varepsilon)$, $ x\in M_\varepsilon$, with $\mu|_{M_\varepsilon}\colon M_\varepsilon\to\mathfrak{g}^\ast$ as the momentum map. In this case, Theorem~\ref{thm:3} coincides with Theorem~\ref{thm:0} for the $G$-action on $(M_\varepsilon,\eta|_{M_\varepsilon},\omega|_{M_\varepsilon})$.
\end{Example}

\section{Further study: Morita equivalence}\label{morita}

We defined the notion of a cosymplectic groupoid action, thus we can also define the notion of Morita equivalence between cosymplectic groupoids as in the case of symplectic groupoids \cite{xu2}.

\begin{Definition}
 A cosymplectic groupoid $\mathcal{G}=({G}_1\rightrightarrows G_0)$ is said to be \textit{Morita equivalent} to another cosymplectic groupoid $\mathcal{H}=({H}_1\rightrightarrows H_0)$ when there is a cosymplectic manifold $M$, a~left cosymplectic $\mathcal{G}$-action and a right cosymplectic $\mathcal{H}$-action on $M$ which satisfies the following conditions:
 \begin{enumerate}\itemsep=0pt
 \item[(1)] Momentum maps $\rho\colon M\to G_0$ and $\sigma\colon M\to H_0$ are surjective submersions.

 \item[(2)] Actions of $\mathcal{G}$ and $\mathcal{H}$ on $M$ are both free and proper.

 \item[(3)] The two actions commute with each other.

 \item[(4)] The map $\rho$ is constant on each orbit of the action of $\mathcal{H}$ and an induced map $M/\mathcal{H}\to G_0$ is a~diffeomorphism. Similarly, $\sigma$ is constant on each orbit of the action of $\mathcal{G}$ and an induced map~${\mathcal{G}\backslash M\to H_0}$ is a diffeomorphism.
 \end{enumerate}
 The triplet $(M,\rho,\sigma)$ is called an \textit{equivalence bimodule} from $\mathcal{G}$ to $\mathcal{H}$,
 \[\xymatrix{{G}_1\ar@<-.5ex>[d] \ar@<.5ex>[d]&M\ar[ld]^{\rho}\ar[rd]_{\sigma}&{H}_1\ar@<-.5ex>[d] \ar@<.5ex>[d]\\
 G_0&&H_0.
 }\]
\end{Definition}

Regarding the relationship between the Morita equivalence of cosymplectic groupoids $\mathcal{G}$, $\mathcal{H}$ and that of their symplectic subgroupoids $S_\mathcal{G}$, $S_\mathcal{H}$, we obtain the following.
\begin{Proposition}
 Let $\mathcal{G}=({G}_1\rightrightarrows G_0)$ and $\mathcal{H}=({H}_1\rightrightarrows H_0)$ be Morita equivalent cosymplectic groupoids and $S_\mathcal{G}=(S_{G_1}\rightrightarrows G_0)$, $S_\mathcal{H}=(S_{H_1}\rightrightarrows H_0)$ their symplectic subgroupoids. Let $(M,\rho,\sigma)$ be an equivalence bimodule from $\mathcal{G}$ to $\mathcal{H}$ and assume that there is a symplectic leaf $S$ of $M$ which satisfies the following conditions:
 \begin{itemize}\itemsep=0pt
 \item $\rho|_S\colon S\to G_0$, $ \sigma|_S\colon S\to H_0$ are surjective.
 \item For any $x\in S$ and $g\in G_1$ such that $gx$ is defined, $gx\in S$ implies $g\in S_{G_1}$.
 \item For any $x\in S$ and $h\in H_1$ such that $xh$ is defined, $xh\in S$ implies $h\in S_{H_1}$.
 \end{itemize}
 Then the triplet $(S,\rho|_S,\sigma|_S)$ is an equivalence bimodule from $S_\mathcal{G}$ to $S_\mathcal{H}$, and thus these symplectic groupoids are Morita equivalent,
 \[\xymatrix{S_{G_1}\ar@<-.5ex>[d] \ar@<.5ex>[d]&S\ar[ld]^{\rho|_S}\ar[rd]_{\sigma|_S}&{S_{H_1}}\ar@<-.5ex>[d] \ar@<.5ex>[d]\\
 G_0&&H_0.}\]
\end{Proposition}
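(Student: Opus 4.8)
The plan is to verify, one by one, the four defining conditions of an equivalence bimodule for the triplet $(S,\rho|_S,\sigma|_S)$, now with respect to the symplectic groupoids $S_\mathcal{G}$ and $S_\mathcal{H}$. First, Proposition~\ref{prop:2} (applied to the left $\mathcal{G}$-action, and by the symmetric statement to the right $\mathcal{H}$-action) guarantees that $S$ is simultaneously a symplectic left $S_\mathcal{G}$-module and a symplectic right $S_\mathcal{H}$-module; in particular the restricted actions are well defined and preserve $S$. Condition~(3), that the two actions commute, is then immediate, since $g(xh)=(gx)h$ already holds in $M$ and all points involved stay in $S$. For the submersion part of condition~(1), I would use that at each $x\in S$ one has $T_xM=\operatorname{Ker}\eta_x\oplus\mathbb{R}R_x$ with $T_xS=\operatorname{Ker}\eta_x$; the action axiom ${\rm d}\rho(R)=0$ then gives ${\rm d}\rho_x(T_xS)={\rm d}\rho_x(T_xM)=T_{\rho(x)}G_0$, so $\rho|_S$ is a submersion, and likewise $\sigma|_S$, while surjectivity is assumed in the hypotheses.

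Next I would treat condition~(2). Freeness is inherited at once: if $gx=x$ for $g\in S_{G_1}$ and $x\in S$, the same relation in $M$ together with freeness of the $\mathcal{G}$-action forces $g=1_{\rho(x)}$. The essential point is properness, and here the hypotheses on the leaf enter decisively. Writing $\tau\colon G_1\subscripts{s}{\times}{\rho}M\to M\times M$, $(g,x)\mapsto(gx,x)$, for the (proper) action map, I claim $\tau^{-1}(S\times S)=S_{G_1}\subscripts{s}{\times}{\rho}S$: if $(g,x)\in\tau^{-1}(S\times S)$ then $x\in S$ and $gx\in S$, whence the second hypothesis on $S$ gives $g\in S_{G_1}$; conversely every element of $S_{G_1}\subscripts{s}{\times}{\rho}S$ maps into $S\times S$ by Proposition~\ref{prop:2}. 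Since the restriction of a proper map to the preimage of a subspace is proper, the restricted action map $S_{G_1}\subscripts{s}{\times}{\rho}S\to S\times S$ is proper.

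Finally I would establish condition~(4), which I expect to be the main obstacle. Constancy of $\rho|_S$ on $S_\mathcal{H}$-orbits is inherited from $\rho$, so $\rho|_S$ descends to a smooth map $\bar\rho\colon S/S_\mathcal{H}\to G_0$, surjective because $\rho|_S$ is. For injectivity, suppose $\rho(x)=\rho(y)$ with $x,y\in S$; since $M/\mathcal{H}\to G_0$ is a diffeomorphism, $x$ and $y$ share an $\mathcal{H}$-orbit, say $x=yh$, and as $y,yh\in S$ the third hypothesis forces $h\in S_{H_1}$, so $x$ and $y$ already lie in the same $S_\mathcal{H}$-orbit. Thus $\bar\rho$ is a smooth bijection; to upgrade it to a diffeomorphism I would factor $\rho|_S=\bar\rho\circ p_S$ through the quotient submersion $p_S\colon S\to S/S_\mathcal{H}$, so that $\bar\rho$ is itself a submersion, hence a bijective submersion and therefore a diffeomorphism. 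The symmetric argument for $\sigma|_S$ and $\mathcal{G}$ completes the proof. The crux throughout is the pair of hypotheses forcing any arrow connecting two points of $S$ to lie in the symplectic subgroupoid; without them neither the properness identity nor the injectivity step would survive the restriction to the leaf.
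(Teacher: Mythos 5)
Your proof is correct and follows essentially the same route as the paper's: Proposition~\ref{prop:2} to restrict both actions to the leaf $S$, the Reeb condition ${\rm d}\rho(R)=0$ (together with $\rho$ being a submersion) to get submersivity of $\rho|_S$ and $\sigma|_S$, and the two arrow-hypotheses on $S$ combined with the diffeomorphism $M/\mathcal{H}\to G_0$ to obtain the induced diffeomorphisms in condition~(4). The only difference is that you explicitly verify freeness, properness, commutation, and the bijective-submersion step, which the paper leaves as ``easily verified.''
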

\begin{proof}
 First, Proposition~\ref{prop:2} implies that actions of $S_{\mathcal{G}}$ and $S_\mathcal{H}$ preserves the leaf $S$, and these actions are both symplectic.

 Since actions of $\mathcal{G}$, $\mathcal{H}$ are both cosymplectic, ${\rm d}\rho(R)=0$, $ {\rm d}\sigma(R)=0$ holds for the Reeb vector field $R$ of $M$. Hence $\rho|_S$, $\sigma|_S$ are submersions.

 Then $\rho|_S$ is constant along each orbit of the $S_{\mathcal{H}}$-action, and it induces a diffeomorphism $S/S_{\mathcal{H}}\to G_0$ since for $x\in S$, $gx\in S$ implies $g\in S_{G_1}$ and $\rho$ induces a diffeomorphism $M/\mathcal{H}\to G_0$. Similarly, we can see that $\sigma|_S$ induces a diffeomorphism $S_{\mathcal{G}}\backslash S\to H_0$. The other conditions can be easily verified.
\end{proof}

Xu \cite{xu2} studied the notion of Morita equivalence of symplectic groupoids and applied it to investigate Morita equivalence of Poisson manifold \cite{xu1}. In this paper, we defined the notion of a cosymplectic groupoid action and that of Morita equivalence between cosymplectic groupoids. Regarding them, future work includes demonstrating that results parallel to those in the case of symplectic groupoids hold (e.g., whether Morita equivalence between two cosymplectic groupoids implies an equivalence of categories between their module categories).

\subsection*{Acknowledgements}

The author is grateful to R.~Goto for his encouragement. The author also thank N.~Ikeda for useful conversations. He greatly appreciates the suggestions of the anonymous referees which considerably improved the presentation. This work was supported by JSPS KAKENHI Grant Number JP23KJ1487.

\pdfbookmark[1]{References}{ref}
\LastPageEnding

\end{document}